  \def\MR#1{}
\long\def\@savemarbox#1#2{\global\setbox#1\vtop{\hsize\marginparwidth 
  \@parboxrestore\tiny\raggedright #2}}
\numberwithin{equation}{section}
\theoremstyle{plain}
\newtheorem{theorem}[equation]{Theorem}
\newtheorem{lemma}[equation]{Lemma}
\newtheorem{conjecture}[equation]{Conjecture}
\newtheorem*{namedtheorem}{\theoremname}
\newcommand{\theoremname}{testing}
\theoremstyle{definition}
\newtheorem{definition}[equation]{Definition}
\newtheorem{question}[equation]{Question}
\newcommand{\refthm}[1]{Theorem~\ref{Thm:#1}}
\newcommand{\reflem}[1]{Lemma~\ref{Lem:#1}}
\newcommand{\refconj}[1]{Conjecture~\ref{Conj:#1}}
\newcommand{\refeqn}[1]{\eqref{Eqn:#1}}
\newcommand{\refdef}[1]{Definition~\ref{Def:#1}}
\newcommand{\refsec}[1]{Section~\ref{Sec:#1}}
\newcommand{\refques}[1]{Question~\ref{Ques:#1}}
\newcommand{\reffig}[1]{Figure~\ref{Fig:#1}}
\newcommand{\HH}{{\mathbb{H}}}
\newcommand{\RR}{{\mathbb{R}}}
\newcommand{\ZZ}{{\mathbb{Z}}}
\newcommand{\CC}{{\mathbb{C}}}
\newcommand{\from}{\colon} 
\newcommand{\bdy}{\partial}
\renewcommand{\setminus}{\smallsetminus}
\newcommand{\vol}{\operatorname{vol}}
\newcommand{\PSL}{\operatorname{PSL}}
\newcommand{\lmin}{{\ell_{\rm min}}}
\newcommand{\area}{\operatorname{area}}
\newcommand{\abs}[1]{\left\vert #1 \right\vert}
\newcommand{\cut}{{\backslash \backslash}}
\newcommand{\guts}{\operatorname{guts}}
\newcommand{\vtet}{{v_{\rm tet}}}
\newcommand{\voct}{{v_{\rm oct}}}
\title{A survey of hyperbolic knot theory}
\author{David Futer}
\author{Efstratia Kalfagianni}
\author{Jessica S.~Purcell}
\address[]{Department of Mathematics, Temple University,
Philadelphia, PA 19122, USA}
\email[]{dfuter@temple.edu}
\address[]{Department of Mathematics, Michigan State University, East
Lansing, MI, 48824, USA}
\email[]{kalfagia@math.msu.edu}
\address[]{School of Mathematical Sciences, Monash University, VIC 3800, Australia }
\email[]{jessica.purcell@monash.edu}
\subjclass[2010]{57M25, 57M27, 57M50}
\keywords{hyperbolic knot, hyperbolic link, volume, slope length, cusp shape, Dehn filling}
\begin{document}
\begin{abstract}
We survey some tools and techniques  for determining geometric properties of a link complement from a link diagram. 
In particular, we survey the tools used to estimate geometric invariants in terms of basic diagrammatic link invariants. 
We focus on determining when a link is hyperbolic, estimating its volume, and bounding its cusp shape and cusp area. We give sample applications and state some open questions and conjectures.
\end{abstract}

\maketitle

\section{Introduction}\label{Sec:Intro}

Every link $L\subset S^3$ defines a compact, orientable 3-manifold boundary consisting of tori; namely, the link exterior $X(L)=S^3\setminus N(L)$, where $N(L)$ denotes an open regular neighborhood. The interior of $X(L)$ is homeomorphic to the link complement $S^3\setminus L$. Around 1980, Thurston proved that link complements decompose into pieces that admit locally homogeneous geometric structures. In the most common and most interesting scenario, the entire link complement has a hyperbolic structure, that is a metric of constant curvature $-1$. By Mostow--Prasad rigidity, this hyperbolic structure is unique up to isometry, hence geometric invariants of $S^3\setminus L$ give topological invariants of $L$ that provide a wealth of information about $L$ to aid in its classification.  

An important and difficult problem is to determine the geometry of a link complement directly from link diagrams, and to estimate geometric invariants such as volume and the lengths of geodesics in terms of basic diagrammatic  invariants of $L$. This problem often goes by the names \emph{WYSIWYG topology}{\footnote{\emph{WYSIWYG}  stands for ``what you see is what you get''.}}
or \emph{effective geometrization} \cite{Johnson:WYSIWYG}.  Our purpose in this paper is to survey some results that effectively predict geometry in terms of diagrams, and to state some open questions. In the process, we also summarize some of the most commonly used tools and techniques that have been employed to study this problem.

\subsection{Scope and aims}
This survey is primarily devoted to three main topics: determining when a knot or link is hyperbolic, bounding its volume, and estimating its cusp geometry. Our main goal is to focus 
on the methods, techniques, and tools of the field, in the hopes that this paper will lead to more research, rather than strictly listing previous results. 

This focus overlaps significantly with the list of topics in Adams' survey article \emph{Hyperbolic knots}~\cite{Adams:survey}. That survey, written in 2003 and published in 2005, came out just as the pursuit of effective geometrization was starting to mature. Thus, although the topics are quite similar, both the results and the underlying techniques have advanced to a considerable extent. This is especially visible in efforts to predict hyperbolic volume (\refsec{Volume}), where only a handful of the results that we list were known by 2003. The same pattern asserts itself throughout.

As with all survey articles, the list of results  and open problems that we can address is necessarily incomplete.
We are not addressing the very interesting questions on the geometry of embedded surfaces, lengths and isotopy classes of geodesics, exceptional Dehn fillings, or geometric properties of other knot and link invariants. 
Some of the results and techniques we have been unable to cover will appear in a forthcoming book in preparation by Purcell \cite{Purcell:book}. 

\subsection{Originality, or lack thereof}
With one exception, all of the results presented in this survey have appeared elsewhere in the literature. For all of these results, we point to references rather than giving rigorous proofs. However, we often include quick sketches of arguments to convey a sense of the methods that have been employed.

The one exception to this rule is \refthm{SymmetricKnotVolume}, which has not previously appeared in writing. Even this result cannot be described as truly original, since the proof works by assembling a number of published theorems. We include the proof to indicate how to assemble the ingredients.

\subsection{Organization}
We organize this survey as follows: \refsec{Definitions} introduces terminology and background that we will use throughout. \refsec{Hyperbolic} is concerned with the problem of determining whether a given link is hyperbolic. We summarize some of the most commonly used methods used for this problem, and provide examples. In \refsec{Volume} and \refsec{Cusps}, we address the problem of estimating important geometric invariants of hyperbolic link complements in terms of  diagrammatic quantities. In \refsec{Volume}, we discuss methods for obtaining two sided combinatorial bounds on the hyperbolic volume of link complements. In \refsec{Cusps}, we address the analogous questions for cusp shapes and for lengths of curves on cusp tori.

\subsection{Acknowledgements} Futer is supported in part by NSF grant DMS--1408682.
Kalfagianni is supported in part by NSF grants DMS-1404754 and DMS-1708249. Purcell is supported in part by the Australian Research Council.
All three authors acknowledge support from NSF grants DMS--1107452, 1107263, 1107367, ``RNMS: Geometric Structures and Representation Varieties'' (the GEAR Network).

\section{Definitions}\label{Sec:Definitions}

In this section, we gather many of the key definitions that will be used throughout the paper. Most of these definitions can be found (and are better motivated) in standard textbooks on knots and links, and on $3$--manifolds and hyperbolic geometry. We list them briefly for ease of reference.

\subsection{Diagrams of knots and links}
Some of the initial study of knots and links, such as the work of Tait in the late 1800s, was a study of \emph{diagrams}: projections of a knot or link onto a plane $\RR^2 \subset \RR^3$, which can be compactified to $S^2\subset S^3$. We call the surface of projection the \emph{plane of projection} for the diagram. We may assume that a link has a diagram that is a 4-valent graph on $S^2$, with over-under crossing information at each vertex. When studying a knot via diagrams, there are obvious moves that one can make to the diagram that do not affect the equivalence class of knot; for example these include \emph{flypes} studied by Tait, and Reidemeister moves studied in the 1930s. Without going into details on these moves, we do want our diagrams to be ``sufficiently reduced,'' in ways that are indicated by the following definitions. 

\begin{definition}\label{Def:Prime}
A diagram of a link is \emph{prime} if for any simple closed curve $\gamma \subset S^2$, intersecting the diagram transversely exactly twice in edges, $\gamma$ bounds a disk $D^2 \subset S^2$ that intersects the diagram in a single edge with no crossings. 
\end{definition}

Two non-prime diagrams are shown in \reffig{Prime}, left. The first diagram can be simplified by removing a crossing. The second diagram cannot be reduced in the same way, because the knot is composite; it can be thought of as composed of two simpler prime diagrams by joining them along unknotted arcs. Prime diagrams are seen as building blocks of all knots and links, and so we restrict to them.

\begin{figure}
   \includegraphics{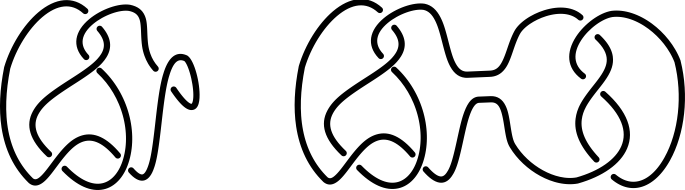}
  \hspace{.5in}
    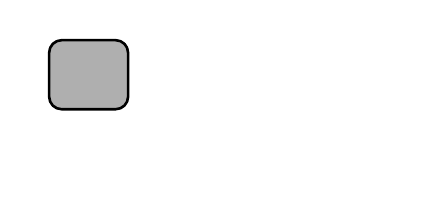
  \caption{Left: two diagrams that are not prime. Right: a twist reduced diagram.}
  \label{Fig:Prime}
\end{figure}

\begin{definition}\label{Def:CrossingNumber}
  Suppose $K$ is a knot or link with diagram $D$. The \emph{crossing number} of the diagram, denoted $c(D)$, is the number of crossings in $D$. 
The \emph{crossing number} of $K$, denoted $c(K)$, is defined to be the minimal number of crossings in any diagram of $K$.
\end{definition}

Removing a crossing as on the left of \reffig{Prime} gives a diagram that is more reduced. The following definition gives another way to reduce diagrams.

\begin{definition}\label{Def:TwistReduced}
Let $K$ be a knot or link with diagram $D$. The diagram is said to be \emph{twist reduced} if whenever $\gamma$ is a simple closed curve in the plane of projection meeting the diagram exactly twice in two crossings, running directly through the crossing, then $\gamma$ bounds a disk containing only a string of alternating bigon regions in the diagram. See \reffig{Prime}, right. 
\end{definition}

Any diagram can be modified to be twist reduced by performing a sequence of flypes and removing unnecessary crossings. 

\begin{definition}\label{Def:TwistNumber}
  A \emph{twist region} in a diagram is a portion of the diagram consisting of a maximal string of bigons arranged end-to-end, where maximal means there are no other bigons adjacent to the ends. Additionally, we require twist regions to be alternating (otherwise, remove crossings).

  The number of twist regions in a prime, twist reduced diagram is the \emph{twist number} of the diagram, and is denoted $t(D)$. The minimum of $t(D)$ over all diagrams of $K$ is denoted $t(K)$.
\end{definition}

\subsection{The link complement}

Rather than study knots exclusively via diagrams and graphs, we typically consider the \emph{knot complement}, namely the $3$--manifold $S^3\setminus K$. This is homeomorphic to the interior of the compact manifold $X(K):=S^3\setminus N(K)$, called the \emph{knot exterior}, where $N(K)$ is a regular neighborhood of $K$. When we consider knot complements and knot exteriors, we are able to apply results in $3$--manifold topology, and consider curves and surfaces embedded in them. The following definitions apply to such surfaces.

\begin{definition}\label{Def:Incompressible}
  An orientable surface $S$ properly embedded in a compact orientable $3$--manifold $\overline{M}$ is \emph{incompressible} if whenever $E\subset \overline{M}$ is a disk with $\bdy E\subset S$, there exists a disk $E'\subset S$ with $\bdy E' = \bdy E$.
  $S$ is \emph{$\bdy$-incompressible} if whenever $E\subset \overline{M}$ is a disk whose boundary is made up of an arc $\alpha$ on $S$ and an arc on $\bdy \overline{M}$, there exists a disk $E'\subset S$ whose boundary is made up of the arc $\alpha$ on $S$ and an arc on $\bdy S$.
\end{definition}

\begin{definition}\label{Def:Essential}
Let $\overline{M}$ be a compact orientable $3$--manifold. 
Consider a (possibly non-orientable) properly embedded surface $S \subset \overline{M}$. Let $\widetilde{S}$ be the boundary of a regular neighborhood $N(S) \subset \overline{M}$. If $S \neq S^2$, it  is said to be \emph{essential} if $\widetilde{S}$ is incompressible and $\bdy$-incompressible. A two--sphere $S \subset M$ is called \emph{essential} if it does not bound a $3$--ball.
  
  We will say that $\overline{M}$ is  \emph{Haken} if it is irreducible and contains an essential surface $S$. In this case, we also say the interior $M$ is Haken.
\end{definition}

Finally, we will sometimes consider knot complements that are fibered, in the sense of the following definition.

\begin{definition}\label{Def:Fibered}
  A $3$--manifold $M$ is said to be \emph{fibered} if it can be written as a fiber bundle over $S^1$, with fiber a surface. Equivalently, $M$ is the mapping torus of a self-homeomorphism $f$ of a (possibly punctured) surface $S$. That is, there exists $f\from S\to S$ such that
  \[ M = S\times I / (x,0) \sim (f(x),1). \]
  The map $f$ is called the \emph{monodromy} of the fibration. 
\end{definition}

\subsection{Hyperbolic geometry notions}

The knot and link complements that we address in this article also admit geometric structures, as in the following definition. 

\begin{definition}\label{Def:HyperbolicKnot}
  A knot or link $K$ is said to be \emph{hyperbolic} if its complement admits a complete metric of constant curvature $-1$. Equivalently, it is hyperbolic 
if $S^3 \setminus K = \HH^3 / \Gamma$, where $\HH^3$ is hyperbolic $3$--space and $\Gamma$ is a discrete, torsion-free group of isometries, isomorphic to $\pi_1(S^3 \setminus K)$.
\end{definition}

Thurston showed that a prime knot in $S^3$ is either hyperbolic, or it is a \emph{torus knot} (can be embedded on an unknotted torus in $S^3$), or it is a \emph{satellite knot} (can be embedded in the regular neighborhood of a non-trivial knot) \cite{thurston:bulletin}. This article is concerned with hyperbolic knots and links. 

\begin{definition}\label{Def:CuspedManifold}
  Suppose $\overline{M}$ is a compact orientable $3$--manifold with $\bdy M$ a collection of tori, and suppose the interior $M \subset \overline{M}$ admits a complete hyperbolic structure. We say $M$ is a \emph{cusped manifold}. 

  Moreover, $M$ has ends of the form $T^2\times [1,\infty)$. Under the covering map $\rho\from \HH^3\to M$, each end is geometrically realized as the image of a horoball $H_i\subset \HH^3$. The preimage $\rho^{-1}(\rho(H_i))$ is a collection of horoballs. By shrinking $H_i$ if necessary, we can ensure that these horoballs have disjoint interiors in $\HH^3$. For such a choice of $H_i$, $\rho(H_i) = C_i$ is said to be a \emph{horoball neighborhood} of the \emph{cusp} $C_i$, or \emph{horocusp} in $M$.
\end{definition}

\begin{definition}\label{Def:CuspShape}
  The boundary of a horocusp inherits a Euclidean structure from the hyperbolic structure on $M$. This Euclidean structure is well defined up to similarity. The similarity class is called the \emph{cusp shape}.
\end{definition}

\begin{definition}\label{Def:MaximalCusp}
  For each cusp of $M$ there is an 1--parameter family of horoball neighborhoods obtained by expanding the horoball $H_i$ while keeping the same limiting point on the sphere at infinity. In the preimage, expanding $H_i$ expands all horoballs in the collection $\rho^{-1}(C_i)$. Expand each cusp until the collection of horoballs $\rho^{-1}(\cup C_i)$ become tangent, and cannot be expanded further while keeping their interiors disjoint. This is a choice of \emph{maximal cusps}. The choice depends on the order of expansion of cusps $C_1, \dots, C_n$. If $M$ has a single end $C_1$ then there is a unique choice of expansion, giving a unique maximal cusp referred to as the \emph{the maximal cusp} of $M$.
\end{definition}

\begin{definition}\label{Def:Slope}
For a fixed set of embedded horoball neighborhoods $C_1, \dots, C_n$ of the cusps of a cusped hyperbolic $3$--manifold $M$, we have noted that the torus $\bdy C_i$ inherits a Euclidean metric. Any isotopy class of simple closed curves on the torus is called a \emph{slope}. The \emph{length of a slope} $s$, denoted $\ell(s)$, is defined to be the length of a geodesic representative of $s$ on the Euclidean torus $\bdy C_i$.
\end{definition}

\section{Determining hyperbolicity}\label{Sec:Hyperbolic}

Given a combinatorial description of a knot or link, such as a diagram or braid presentation, one of the first things we would often like to ascertain is whether  the link complement admits a hyperbolic structure at all. In this section, we describe the currently available tools to check this and give examples of knots to which they apply. 

There are three main tools used to prove a link or family of links is hyperbolic. The first is direct calculation, for example using gluing and completeness equations, often with the help of a computer. The second is Thurston's geometrization theorem for Haken manifolds, which says that the only obstruction to $X(K)$ being hyperbolic consists of surfaces with non-negative Euler characteristic. The third is to perform a long Dehn filling on a manifold that is already known to be hyperbolic, for instance by one of the previous two methods. 

\subsection{Computing hyperbolicity directly}

From Riemannian geometry, a manifold $M$ admits a hyperbolic structure if and only if $M = \HH^3 / \Gamma$, where $\Gamma \cong \pi_1(M)$ is a discrete subgroup of $\mathrm{Isom}^+(\HH^3) = \PSL(2,\CC)$. See \refdef{HyperbolicKnot}.

 Therefore one way to find a hyperbolic structure on a link complement is to find a discrete faithful representation of its fundamental group into $\PSL(2,\CC)$. This is usually impractical to do directly. However, note that if a manifold $M$ can be decomposed into simply connected pieces, for example a triangulation by tetrahedra, then these lift to the universal cover. If this cover is isometric to $ \HH^3$, then the lifted tetrahedra will be well-behaved in hyperbolic 3--space. Conversely, if the lifted tetrahedra fit together coherently in $\HH^3$, in a group--equivariant fashion, one can glue the metrics on those tetrahedra to obtain a hyperbolic metric on $M$. This gives a condition for determining hyperbolicity, which is often implemented in practice.

\subsubsection{Gluing and completeness equations for triangulations}
The first method for finding a hyperbolic structure is direct, and is used most frequently by computer, such as in the software SnapPy that computes hyperbolic structures directly from  diagrams \cite{SnapPy}. The method is to first decompose the knot or link complement into ideal tetrahedra, as in \refdef{IdealTetrahedron}, and then to solve a system of equations on the tetrahedra to obtain a hyperbolic structure. See \refthm{GluingCompleteness}.

This method is most useful for a single example, or for a finite collection of examples. For example, it was used by Hoste, Thistlethwaite, and Weeks to classify all prime knots with up to 16 crossings \cite{HTW}. Of the $1,701,903$ distinct prime knots with at most 16 crossings, all but  32 are hyperbolic.

We will give a brief description of the method. For further details, there are several good references, including notes of Thurston \cite{thurston:notes} where these ideas first appeared, and papers by Neumann and Zagier~\cite{NeumannZagier}, and Futer and Gu{\'e}ritaud~\cite{fg:angled-survey}. Purcell is developing a book with details and examples~\cite{Purcell:book}.

\begin{definition}\label{Def:IdealTetrahedron}
  An \emph{ideal tetrahedron} is a tetrahedron whose vertices have been removed. When a knot or link complement is decomposed into ideal tetrahedra, all ideal vertices lie on the link, hence have been removed.
\end{definition}

There are algorithms for decomposing knot and link complements into ideal tetrahedra. For example, Thurston decomposes the figure--8 knot complement into two ideal tetrahedra \cite{thurston:notes}. Menasco generalizes this, describing how to decompose a link complement into two ideal polyhedra, which can then be subdivided into tetrahedra \cite{Menasco:Polyhedra}. Weeks uses a different algorithm in his computer software SnapPea \cite{Weeks:Algorithm}.

Assuming we have a decomposition of a knot or link complement into ideal tetrahedra, we now describe how to turn this into a complete hyperbolic structure. The idea is to associate a complex number to each ideal edge of each tetrahedron encoding the hyperbolic structure of the ideal tetrahedron. The triangulation gives a complete hyperbolic structure if and only if these complex numbers satisfy certain equations: the \emph{edge gluing} and \emph{completeness} equations.

Consider $\HH^3$ in the upper half space model, $\HH^3 \cong \CC\times (0, \infty)$. An ideal tetrahedron $\Delta \subset \HH^3$ can be moved by isometry so that three of its vertices are placed at $0$, $1$, and $\infty$ in $\bdy\HH^3 \cong \CC\cup \{\infty\}$. The fourth vertex lies at a point $z\in \CC \setminus \{0, 1 \}$. The edges between these vertices are hyperbolic geodesics.

\begin{definition}
  The parameter $z\in\CC$ described above is called the \emph{edge parameter} associated with the edge from $0$ to $\infty$. It determines $\Delta$ up to isometry.
\end{definition}

Notice if $z$ is real, then the ideal tetrahedron is flat, with no volume. We will prefer to work with $z$ with positive imaginary part. Such a tetrahedron $\Delta$ is said to be \emph{geometric}, or positively oriented. If $z$ has negative imaginary part, the tetrahedron $\Delta$ is negatively oriented. 

Given a hyperbolic ideal tetrahedron embedded in $\HH^3$ as above, we can apply (orientation--preserving) isometries of $\HH^3$ taking different vertices to $0$, $1$, $\infty$.
By taking each edge to the geodesic from $0$ to $\infty$, we assign edge parameters to all six edges of the ideal tetrahedron. This leads to the following relations between edge parameters:

\begin{lemma}
  Suppose $\Delta$ is a hyperbolic ideal tetrahedron with vertices at $0$, $1$, $\infty$, and $z$. Then the edge parameters of the six edges of $\Delta$ are as follows:
 \begin{itemize}
 \item Edges $[0, \infty]$ and $[1 , z]$ have edge parameter $z$.
 \item Edges $[1 , \infty]$ and $[0 , z]$ have edge parameter $1/(1-z)$.
 \item Edges $[z , \infty]$ and $[0 , 1]$ have edge parameter $(z-1)/z$.
 \end{itemize} 
In particular, opposite edges in the tetrahedron have the same edge parameter.
\end{lemma}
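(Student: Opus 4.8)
The plan is to read the parameters straight off the normalization in the definition. To compute the parameter of an edge $e$, I would apply the orientation-preserving isometry of $\HH^3$---equivalently, the M\"obius transformation of $\bdy\HH^3 = \CC\cup\{\infty\}$---that sends the two endpoints of $e$ to $0$ and $\infty$ and one of the remaining two vertices to $1$; the image of the last vertex is the edge parameter. Such a map is uniquely determined by its values on three points, so each parameter is a single explicit M\"obius computation. Because $\Delta$ is geometric, among the possible normalizations I always take the one placing the final vertex in the upper half-plane, which pins down one value.

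First I would dispose of the three edges meeting the vertex $\infty$, namely $[0,\infty]$, $[1,\infty]$, and $[z,\infty]$. For these the normalizing maps fix $\infty$, so they are affine maps $w\mapsto\alpha w+\beta$, and the computation is immediate: the identity handles $[0,\infty]$ and returns $z$; the map sending $1\mapsto 0,\ z\mapsto 1$ returns $0\mapsto 1/(1-z)$; and the map sending $z\mapsto 0,\ 0\mapsto 1$ returns $1\mapsto (z-1)/z$. Conceptually these three values are just the complex moduli of the three corners of the horospherical cross-section at $\infty$, which is the Euclidean triangle with vertices $0,1,z$; they are also the images of $z$ under the powers of the order-three M\"obius map $w\mapsto 1/(1-w)$, which cyclically permutes $0\to1\to\infty\to0$. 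This already produces the three listed values and attaches each to the correct edge into $\infty$.

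It remains to match the opposite edges $[1,z]$, $[0,z]$, $[0,1]$ to $[0,\infty]$, $[1,\infty]$, $[z,\infty]$ respectively. Rather than recompute, I would invoke the orientation-preserving symmetries of $\Delta$: the half-turns about the common perpendiculars of pairs of opposite edges form a Klein four-group, and each is realized by an explicit M\"obius transformation. For instance $w\mapsto z/w$ interchanges $0\leftrightarrow\infty$ and $1\leftrightarrow z$, while $w\mapsto z(w-1)/(w-z)$ interchanges $0\leftrightarrow 1$ and $\infty\leftrightarrow z$; each permutes the vertex set $\{0,1,\infty,z\}$, hence carries every edge to its opposite, and being an orientation-preserving isometry it preserves edge parameters. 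Applying these two maps equates $[0,\infty]$ with $[1,z]$, $[1,\infty]$ with $[0,z]$, and $[z,\infty]$ with $[0,1]$, which completes the proof and in particular yields the closing ``opposite edges agree'' claim.

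The part I expect to demand the most care is the bookkeeping of orientation conventions. The six normalizations of a single edge realize the full anharmonic orbit $\{z,\ 1/z,\ 1-z,\ 1/(1-z),\ z/(z-1),\ (z-1)/z\}$ of cross-ratios, and one must verify that requiring positive imaginary part (i.e.\ geometricity, $\operatorname{Im} z>0$) isolates exactly the three values in the statement, and that the symmetry maps used above are genuinely orientation-preserving rather than orientation-reversing. Once these conventions are fixed, every remaining verification is a one-line computation.
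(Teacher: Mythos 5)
Your proposal is correct and takes essentially the paper's approach: the survey defines each edge parameter by applying an orientation-preserving M\"obius normalization taking the given edge to the geodesic $[0,\infty]$, and the lemma amounts to exactly the cross-ratio computations you carry out, with your half-turn symmetries $w\mapsto z/w$ and $w\mapsto z(w-1)/(w-z)$ being the standard shortcut for the ``opposite edges agree'' half, and your observation that geometricity ($\operatorname{Im} z>0$) selects one value from each pair $\{z,1/z\}$ correctly resolving the normalization ambiguity. One phrasing nit: each half-turn carries only four of the six edges to their opposites while preserving the remaining opposite pair setwise (e.g.\ $w\mapsto z/w$ maps $[0,\infty]$ and $[1,z]$ each to themselves), but the specific pairings you then invoke are exactly the ones the two maps realize, so nothing in the argument is affected.
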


Suppose an ideal tetrahedron $\Delta$ with vertices at $0$, $1$, $\infty$ and $z$ is glued along the triangle face with vertices at $0$, $\infty$, and $z$ to another tetrahedron $\Delta'$. Then $\Delta'$ will have vertices at $0$, $\infty$, $z$ and at the point $zw$, where $w$ is the edge parameter of $\Delta'$ along the edge  $[0, \infty]$. When we glue all tetrahedra in $\HH^3$ around an ideal edge of the triangulation, if the result is hyperbolic then the product of all edge parameters must be $1$ with arguments summing to $2\pi$. More precisely, the sum of the logs of the edge parameters must be $0+2\pi\,i$. 

\begin{definition}[Gluing equations]
  Let $e$ be an ideal edge of a triangulation of a $3$--manifold $M$, for example a knot or link complement. Let $z_1, \dots, z_k$ be the edge parameters of the edge of the tetrahedra identified to $e$. The \emph{gluing equation} associated with the edge $e$ is:
\begin{equation}\label{Eqn:Gluing}
 \prod_{i=1}^k z_i = 1 \quad \mbox{and} \quad \sum_{i=1}^k \arg(z_i)=2\pi.
 \end{equation}
  Writing this in terms of logarithms, \refeqn{Gluing} is equivalent to:
  \begin{equation}\label{Eqn:LogGluing}
   \sum_{i=1}^k \log(z_i) = 2\pi\,i. 
   \end{equation}
\end{definition}

A triangulation may satisfy all gluing equations at all its edges, and yet fail to give a complete hyperbolic structure. To ensure the structure is complete, an additional condition must be satisfied for each torus boundary component. 

\begin{definition}[Completeness equations]
  Let $T$ be a torus boundary component of a $3$--manifold $M$ whose interior admits an ideal triangulation. 
 
  Truncate the tips of all tetrahedra to obtain a triangulation of $T$. Let $\mu$ be an oriented simple closed curve on $T$, isotoped to meet edges of the triangulation transversely, and to avoid vertices. Each segment of $\mu$ in a triangle cuts off a single vertex of the triangle, which comes from an edge of the ideal triangulation and so has an associated edge parameter $z_i$. If the vertex lies to the right of  $\mu$, let $\epsilon_i=+1$; otherwise let $\epsilon_i=-1$. The \emph{completeness equation} associated to $\mu$ is:
  \begin{equation}
\sum_i \epsilon_i\,\log(z_i) = 0,
\quad  \mbox{ which implies } \quad
   \prod_i z_i^{\epsilon_i} =1.
\end{equation}
\end{definition}

With these definitions, we may state the main theorem.

\begin{theorem}\label{Thm:GluingCompleteness}
  Suppose $\overline{M}$ is a $3$-manifold with torus boundary, equipped with an ideal triangulation. Suppose for some choice of positively oriented edge parameters $\{ z_1, \dots, z_n\}$, the gluing equations are satisfied for each edge, and the completeness equations are satisfied for homology generators $\mu$, $\lambda$ on each component of $\bdy {\overline{M}}$. Then the interior of 
  $\overline{M}$, denoted by $M$, admits a complete hyperbolic structure. Furthermore, the unique hyperbolic metric on $M$ is given by the geometric tetrahedra  determined by the edge parameters. 
\end{theorem}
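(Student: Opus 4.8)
The plan is to build a hyperbolic metric on $M$ by gluing together the geometric ideal tetrahedra determined by the edge parameters, and then to verify that the gluing and completeness hypotheses make this metric both nonsingular and complete. Since each $z_i$ has positive imaginary part, the discussion preceding the theorem shows that $z_i$ determines a positively oriented ideal tetrahedron $\Delta_i \subset \HH^3$, carrying the hyperbolic metric it inherits from $\HH^3$. Whenever two of these tetrahedra are glued along a triangular face, I would realize the identification by an orientation-reversing isometry of $\HH^3$, which is always possible because any two ideal triangles in $\HH^3$ are isometric. This produces a smooth hyperbolic metric on the interior of the union of tetrahedra, and also across the glued faces, away from the edges.

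First I would address the edges. Developing the tetrahedra one at a time around a fixed edge $e$ of the triangulation, each successive tetrahedron is obtained from the previous one by multiplying its far vertex by the corresponding edge parameter, exactly as in the computation preceding the gluing equations. After passing through all $k$ tetrahedra incident to $e$, the cumulative effect is multiplication by $\prod_{i=1}^k z_i$ together with a total rotation by $\sum_{i=1}^k \arg(z_i)$. The gluing equation \refeqn{Gluing} says precisely that this product is $1$ and this angle sum is $2\pi$, so the last tetrahedron closes up against the first with no rotational defect and no scaling. Hence the metric extends smoothly across $e$, and we obtain a genuine hyperbolic structure on $M$ minus its ideal vertices; equivalently, a developing map $\widetilde M \to \HH^3$ and a holonomy representation $\rho \from \pi_1(M) \to \PSL(2,\CC)$ for a (possibly incomplete) hyperbolic structure.

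Next I would analyze each end. The link of an ideal vertex is a torus $T$, triangulated by the truncated tips of the incident tetrahedra, and the hyperbolic structure equips $T$ with a Euclidean similarity structure whose holonomy sends each loop to a similarity of $\CC \isom \bdy \HH^3$ based at the cusp. The scaling factor (derivative) of the holonomy of an oriented curve $\mu$ is exactly $\prod_i z_i^{\epsilon_i}$, the quantity appearing in the completeness equation, where the signs $\epsilon_i$ record which side of $\mu$ each truncated corner lies on. The completeness hypothesis forces this factor to equal $1$ for both generators $\mu$ and $\lambda$ of $\pi_1(T) \isom \ZZ^2$, so the entire holonomy of the cusp torus acts by Euclidean translations. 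This is precisely the condition for the end to be a genuine cusp: it admits a neighborhood isometric to $T^2 \times [1,\infty)$ covered by a horoball, matching \refdef{CuspedManifold}.

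Finally, I would upgrade this to global completeness and identify the metric. Once each end carries a horoball cusp neighborhood as above, removing these neighborhoods leaves a compact core, so any Cauchy sequence eventually stays within a single cusp, where completeness is visible directly; thus the glued metric on $M$ is complete. The standard structure theory for $(\PSL(2,\CC),\HH^3)$-structures then implies that completeness forces the developing map to be a covering map onto the simply connected space $\HH^3$, hence a diffeomorphism, giving $M \isom \HH^3/\Gamma$ with $\Gamma = \rho(\pi_1 M)$ discrete and torsion-free, realized by exactly the geometric tetrahedra we started with. The hard part will be this completeness step: the edge gluing equations are finite, essentially algebraic bookkeeping, but controlling the geometry near the cusps and ruling out incompleteness requires both the similarity-structure analysis of the cusp tori and the general fact that a complete $(G,X)$-structure has a covering developing map. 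Uniqueness of the resulting metric is then immediate from the Mostow--Prasad rigidity already invoked in the introduction.
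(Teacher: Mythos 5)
Your argument is correct and is essentially the same development/holonomy argument that the paper only sketches, deferring details to its references (Thurston's notes, Neumann--Zagier, Futer--Gu\'eritaud): the edge equations (product $1$ with angle sum $2\pi$) remove cone singularities along edges, the completeness equations make the similarity holonomy of each cusp torus translational so that each end is a genuine horoball cusp, and completeness plus the covering property of the developing map and Mostow--Prasad rigidity identify the metric with the one built from the geometric tetrahedra. One terminological slip worth fixing: the isometries of $\HH^3$ realizing the face pairings between positively oriented tetrahedra are orientation-\emph{preserving} elements of $\PSL(2,\CC)$ --- it is only the induced identifications of the faces themselves that reverse orientation --- since a genuinely orientation-reversing isometry of $\HH^3$ would place the two tetrahedra on the same side of their common face.
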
 

In fact, the hypotheses of \refthm{GluingCompleteness} are stronger than necessary. If $\overline{M}$ has $k$ torus boundary components, then only $n-k$ of the $n$ gluing equations are necessary (see  \cite{NeumannZagier} or \cite{Choi}). In addition, only one of $\mu$ or $\lambda$ is required from each boundary component \cite{Choi}. 

Some classes of $3$--manifolds that can be shown to be hyperbolic using \refthm{GluingCompleteness} include the classes of once-punctured torus bundles, 4-punctured sphere bundles, and 2--bridge link complements \cite{GueritaudFuter:2bridge}. (In each class, some low-complexity examples must be excluded to ensure hyperbolicity.) These manifolds have natural ideal triangulations guided by combinatorics. In the case of $2$--bridge knot and link complements, the triangulation is also naturally adapted to a planar diagram of the link \cite{SakumaWeeks}. Once certain low-complexity cases (such as ($2,q$) torus links) have been excluded, one can show that the gluing equations for these triangulations have a solution. This gives a direct proof that the manifolds are hyperbolic. 

\subsubsection{Circle packings and right angled polyhedra}\label{Sec:CirclePack}
Certain link complements have very special geometric properties that allow us to compute their hyperbolic structure directly, but with less work than solving nonlinear gluing and completeness equations as above. These include the Whitehead link, which can be obtained from a regular ideal octahedron with face-identifications \cite{thurston:notes}. They also include an important and fairly general family of link complements called \emph{fully augmented links}, which we now describe. 

Starting with any knot or link diagram, identify \emph{twist regions}, as in \refdef{TwistNumber}. The left of \reffig{Augmented} shows a knot diagram with two twist regions. Now, to each twist region, add a simple unknotted closed curve encircling the two strands of the twist region, as shown in the middle of \reffig{Augmented}. This is called a \emph{crossing circle}. Because each crossing circle is an unknot, we may perform a full twist along a disk bounded by that unknot without changing the homeomorphism type of the link complement.

This allows us to remove as many pairs of crossings as possible from twist regions. An example is shown on the right of \reffig{Augmented}. The result is the diagram of a fully augmented link. 

\begin{figure}
\includegraphics{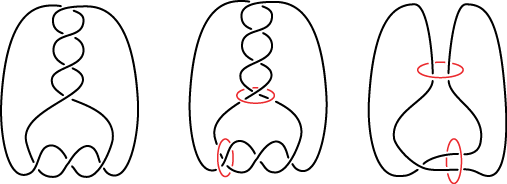}
\caption{Left: a diagram of a knot $K$. Center: adding a crossing circles for each twist region of $K$ produces a link $J$. Right: removing full twists produces a \emph{fully augmented link} $L$ with the property that $S^3 \setminus J$ is homeomorphic to $S^3 \setminus L$.}
\label{Fig:Augmented}
\end{figure}

Provided the original link diagram before adding crossing circles is sufficiently reduced (prime and twist reduced; see Definitions~\ref{Def:Prime} and~\ref{Def:TwistReduced}), the resulting fully augmented link will be hyperbolic, and its hyperbolic structure can be completely determined by a circle packing. The procedure is as follows.

Replace the diagram of the fully augmented link with a trivalent graph by replacing each neighborhood of a crossing circle (with or without a bounded crossing) by a single edge running between knot strands, closing the knot strands. See \reffig{Decomp}, left. Now take the dual of this trivalent graph; this is a triangulation of $S^2$. Provided the original diagram was reduced, there will be a circle packing whose nerve is this triangulation of $S^2$. The circle packing and its orthogonal circles cut out a right angled ideal polyhedron in $\HH^3$. The hyperbolic structure on the complement of the fully augmented link is obtained by gluing two copies of this right angled ideal polyhedron. More details are in \cite{FuterPurcell, purcell:augmented}.

\begin{figure}
  \includegraphics{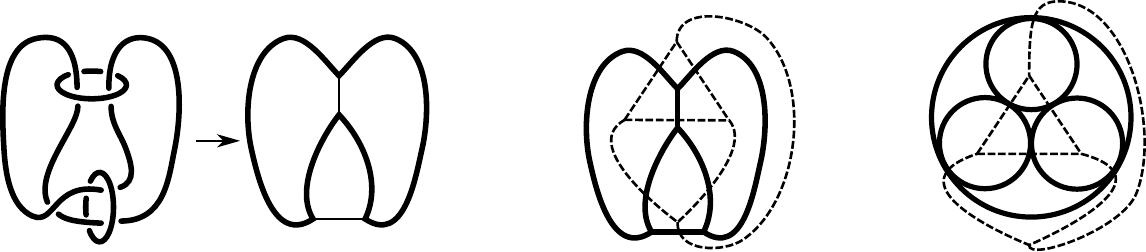}
  \caption{Left: Obtain a 3-valent graph by replacing crossing circles with edges. Middle: The dual is a triangulation of $S^2$. Right: The nerve of the triangulation defines a circle packing that cuts out a polyhedron in $\HH^3$. Two such polyhedra glue to form $S^3\setminus L$.}
  \label{Fig:Decomp}
\end{figure}

\subsection{Geometrization of Haken manifolds}

The methods of the previous section have several drawbacks. While solving gluing and completeness equations works well for examples, it is difficult to use the methods to find hyperbolic structures for infinite classes of examples. The method that has been most useful to show infinite examples of knots and links are hyperbolic is to apply Thurston's geometrization theorem for Haken manifolds, which takes the following form for manifolds with torus boundary components.

\begin{theorem}[Geometrization of Haken manifolds]\label{Thm:Haken}
  Let $M$ be the interior of a compact manifold $\overline{M}$, such that $\bdy \overline{M}$ is a non-empty union of tori. Then exactly one of the following holds:
  \begin{itemize}
  \item $\overline{M}$ admits an essential torus, annulus, sphere, or disk, or
  \item $M$ admits a complete hyperbolic metric.
  \end{itemize}
 \end{theorem}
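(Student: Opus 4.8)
This statement is Thurston's hyperbolization theorem for Haken manifolds, one of the deepest results in the subject, so rather than attempt a self-contained argument I would outline the strategy of Thurston's proof and flag its single hardest ingredient. First, the ``exactly one'' dichotomy is the easy half: the two alternatives are mutually exclusive for soft geometric reasons. If $M$ carries a complete hyperbolic metric then its universal cover is $\HH^3 \isom \RR^3$, so $M$ is aspherical and irreducible, ruling out an essential sphere; its fundamental group is a torsion-free Kleinian group whose only $\ZZ \oplus \ZZ$ subgroups are conjugate into the cusps, which forces the boundary tori to be incompressible (no essential disk) and forbids an essential torus; and the structure of elementary subgroups together with the thick--thin decomposition forbids an essential annulus. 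Thus at most one alternative can hold, and the entire content is to \emph{produce} the metric whenever these topological obstructions are absent.

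For the substantive direction, the plan is to induct along a hierarchy. By Haken--Waldhausen, a Haken manifold admits a finite sequence of cuts along essential surfaces $M = M_0 \rightsquigarrow M_1 \rightsquigarrow \cdots \rightsquigarrow M_N$ terminating in a disjoint union of balls. I would reverse this sequence: start from the balls, which carry a trivial relative hyperbolic structure, and glue the pieces back one surface at a time. At each stage I have a manifold $N$ equipped with a geometrically finite hyperbolic structure, and I must realize the next gluing --- either identifying two boundary subsurfaces of $N$, or a self-gluing --- by a genuine hyperbolic structure on the result.

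The gluing step is where all of the analysis lives, via Thurston's skinning map. For a piece $N$ with incompressible boundary, its geometrically finite structures are parametrized by the Teichm\"uller space $\mathcal{T}(\bdy N)$, and the skinning map $\sigma \colon \mathcal{T}(\bdy N) \to \mathcal{T}(\bdy N)$ records the conformal structure induced on the far side of each boundary component. To glue $N$ to itself by the gluing involution $\tau$ one needs a fixed point of $\tau \circ \sigma$, which Thurston obtains from the fact that $\sigma$ strictly contracts the relevant metric on $\mathcal{T}(\bdy N)$, combined with a bounded-image and compactness argument. This splits into two cases: in the generic \emph{acylindrical} case the contraction yields the fixed point directly by iteration, whereas in the \emph{fibered} case the surface is a fiber, the naive skinning picture degenerates, and one instead invokes the Double Limit Theorem --- the pseudo-Anosov monodromy supplies a pair of filling laminations whose ``double limit'' data produce the doubly degenerate structure on the infinite cyclic cover, descending to the mapping torus.

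The hardest part, by a wide margin, is the analytic control underlying both the contraction estimate and the Double Limit Theorem: one must prove uniform contraction of the skinning map and compactness for sequences of Kleinian groups, while preventing the appearance of spurious parabolics in the limit. These rest on the full deformation theory of geometrically finite Kleinian groups --- the Bers and Maskit parametrizations, Sullivan's rigidity and finiteness, and Thurston's efficiency estimates for pleated surfaces --- and assembling them into a single fixed-point mechanism robust enough to survive every stage of the hierarchy is the technical heart of the theorem. It is precisely this portion for which I would defer to the literature (Thurston, and the later treatments of Otal, Kapovich, and McMullen) rather than reprove.
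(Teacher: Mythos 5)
The paper offers no proof of this theorem: it is stated as Thurston's geometrization theorem for Haken manifolds and used as a black box throughout, so your decision to outline the strategy and defer the analytic core to Thurston, Otal, Kapovich, and McMullen matches the paper's treatment exactly. Your sketch is an accurate summary of the standard proof (exclusivity via asphericity and the structure of elementary subgroups of Kleinian groups, then induction up a Haken hierarchy with the skinning-map fixed point plus Bounded Image/compactness in the acylindrical case and the Double Limit Theorem in the fibered case); the only caveat worth recording is that the exclusivity half tacitly requires the complete hyperbolic metric to have finite volume (so that every $\ZZ \oplus \ZZ$ subgroup is peripheral and boundary-parallel surfaces are detected), a hypothesis the survey leaves implicit in its statement.
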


Thus the method to prove $M$ is hyperbolic following \refthm{Haken} is to show
$\overline{M}$ cannot admit embedded essential surfaces of nonnegative Euler characteristic. Arguments ruling out such surfaces are typically topological or combinatorial in nature.

Some sample applications of this method are as follows. Menasco used the method to prove any alternating knot or link, aside from a $(2,q)$-torus link, is hyperbolic \cite{Menasco:Alternating}. Adams and his students generalized Menasco's argument to show that almost alternating and toroidally alternating links are hyperbolic \cite{Adams:ToroidallyAlt, Adams:AlmostAlt}. There are many other generalizations, e.g.\ \cite{fkp:hyp}. 

Menasco's idea was to subdivide an alternating link complement into two balls, above and below the plane of projection, and crossing balls lying  in a small neighborhood of each crossing, with equator along the plane of projection. An essential surface can be shown to intersect the balls above and below the plane of projection in disks only, and to intersect crossing balls in what are called \emph{saddles}. These saddles act as fat vertices on the surface, and can be used to obtain a bound on the Euler characteristic of an embedded essential surface. Combinatorial arguments, using properties of alternating diagrams, then rule out surfaces with non-negative Euler characteristic.

More generally, classes of knots and links can be subdivided into simpler pieces, whose intersection with essential surfaces is then examined. Typically, surfaces with nonnegative Euler characteristic can be restricted to lie in just one or two pieces, and then eliminated.
\medskip

Thurston's \refthm{Haken} can also be used to show that manifolds with certain properties are hyperbolic. For example, consider again the gluing equations. This gives a complicated nonlinear system of equations. If we consider only the imaginary part of the logarithmic gluing equation \refeqn{LogGluing},
the system becomes linear: the sums of dihedral angles around each edge must be $2\pi$. It is much easier to solve such a system of equations.

\begin{definition}\label{Def:AngleStruct}
  Suppose $M$ is the interior of a compact manifold with torus boundary, with an ideal triangulation. A solution to the imaginary part of the (logarithmic) gluing equations \refeqn{LogGluing} for the triangulation is called a \emph{generalized angle structure} on $M$. If all angles lie strictly between $0$ and $\pi$, the solution is called an \emph{angle structure}. See \cite{fg:angled-survey, luo-tillmann:generalized-angles} for background on (generalized) angle structures.
\end{definition}

\begin{theorem}[Angle structures and hyperbolicity]\label{Thm:AngleStruct}
  If $M$ admits an angle structure, then $M$ also admits a hyperbolic metric. 
\end{theorem}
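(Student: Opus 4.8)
The plan is to prove this along the variational lines of Casson and Rivin, producing the hyperbolic metric by maximizing a volume functional over the space of angle structures. Write $\mathcal{A}(M) \subset \RR^{3n}$ for the set of all angle structures, where $n$ is the number of ideal tetrahedra and each tetrahedron contributes three dihedral angles. By \refdef{AngleStruct}, $\mathcal{A}(M)$ is cut out by the linear equations requiring the three angles of each tetrahedron to sum to $\pi$ and the angles around each edge to sum to $2\pi$, together with the open conditions that every angle lie strictly in $(0,\pi)$. Thus $\mathcal{A}(M)$ is the relative interior of the compact convex polytope $\overline{\mathcal{A}(M)}$ obtained by relaxing the open conditions to $0 \le \theta \le \pi$, and it is nonempty by hypothesis.

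Next I would introduce the volume functional. To an ideal tetrahedron with dihedral angles $\alpha,\beta,\gamma$ one assigns the volume $\Lambda(\alpha)+\Lambda(\beta)+\Lambda(\gamma)$, where $\Lambda(\theta) = -\int_0^\theta \log\abs{2\sin t}\,dt$ is the Lobachevsky function, and one sets $\vol \from \overline{\mathcal{A}(M)} \to \RR$ to be the sum of these tetrahedron volumes. Since $\Lambda$ is continuous on $[0,\pi]$ with $\Lambda(0)=\Lambda(\pi)=0$, the functional $\vol$ is continuous on the compact set $\overline{\mathcal{A}(M)}$ and hence attains a maximum. Two analytic inputs drive the argument: first, the second derivative $\Lambda''(\theta)=-\cot\theta$ shows that $\vol$ is strictly concave along the affine subspace cut out by the linear constraints, so any critical point lying in $\mathcal{A}(M)$ is unique and is the global maximum; second, $\Lambda'(\theta) = -\log\abs{2\sin\theta} \to +\infty$ as $\theta \to 0^+$, which is the crucial boundary estimate.

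The heart of the argument, and what I expect to be the main obstacle, is to show that the maximum is attained in the interior $\mathcal{A}(M)$ rather than on $\bdy \overline{\mathcal{A}(M)}$. A boundary point is one where some tetrahedron is degenerate, with a vanishing (hence also a complementary $\pi$) angle. Because $\Lambda'$ blows up at $0$, moving from such a face along a segment into $\mathcal{A}(M)$ increases $\vol$ to first order: the one-sided directional derivative pointing into the interior is $+\infty$. Making this rigorous — controlling the competition between the angles tending to $0$ and those tending to $\pi$ within a single tetrahedron and across the linear constraints — is the technical crux, carried out carefully in \cite{fg:angled-survey}. It forces the maximizer to lie in $\mathcal{A}(M)$, where it is a genuine critical point of $\vol$ subject to the constraints.

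Finally I would identify this critical point with a complete hyperbolic metric. Each angle structure already determines, tetrahedron by tetrahedron, a positively oriented shape $z_i$ whose dihedral angles are the given ones, so the angle-sum conditions defining $\mathcal{A}(M)$ encode the imaginary parts of the logarithmic gluing equations \refeqn{LogGluing}. Computing the gradient of $\vol$ with Lagrange multipliers for the edge and tetrahedron constraints, Rivin's key calculation shows that a point of $\mathcal{A}(M)$ is a critical point of $\vol$ if and only if the shapes $z_i$ satisfy the remaining (real part) gluing equations together with the completeness equations of \refthm{GluingCompleteness}. Since the boundary estimate produced an interior maximizer, which is exactly such a critical point, \refthm{GluingCompleteness} assembles the resulting geometric tetrahedra into a complete hyperbolic metric on $M$, as desired.
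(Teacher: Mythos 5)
Your overall architecture---maximize the Lobachevsky volume functional over $\mathcal{A}(M)$, show the maximizer is interior, and identify interior critical points with solutions of the gluing and completeness equations via Casson--Rivin---is the standard variational program, and its first and last steps are correct as you describe them. But the step you flag as the ``technical crux'' is not merely technical: it is false in general, and this is a genuine gap. The blow-up $\Lambda'(\theta) = -\log\abs{2\sin\theta} \to +\infty$ as $\theta \to 0^+$ only rules out a boundary maximizer at which some tetrahedron degenerates to type $(0,\theta,\pi-\theta)$ with $\theta \in (0,\pi)$, where the infinite derivative of the vanishing angle beats the finite derivatives of the other two. A tetrahedron can instead degenerate to type $(0,0,\pi)$, and there the singularities cancel: along an inward segment with angles $(at,\, bt,\, \pi-(a+b)t)$ one computes that $\frac{d}{dt}\bigl(\Lambda(at)+\Lambda(bt)+\Lambda(\pi-(a+b)t)\bigr)$ tends to the \emph{finite} limit $(a+b)\log(a+b) - a\log a - b\log b$, because $\Lambda'(\theta)\to+\infty$ as $\theta\to\pi^-$ as well and the logarithmic terms cancel. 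Once several tetrahedra are coupled through the linear constraints, the total inward derivative at such a boundary point can be nonpositive, and boundary maximizers genuinely occur: there exist ideal triangulations that carry angle structures but are \emph{not} geometric. For such a triangulation your argument produces no critical point in $\mathcal{A}(M)$ at all, hence no metric. The reference you lean on, \cite{fg:angled-survey}, does not establish interiority in general---it proves it only for special families such as punctured-torus bundles and two-bridge links, and explicitly identifies interiority of the maximizer as the outstanding gap in the Casson--Rivin program; the same issue (angled triangulations need not be geometric) is why \refconj{Casson} is only known in the geometric case.

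The paper's proof, due to Casson and written up by Lackenby \cite{Lackenby:Word}, takes a route that sidesteps this obstruction entirely: isotope any essential surface into normal form, so that it meets each tetrahedron in triangles and quads; use the angle structure to assign each normal disk a combinatorial area in the spirit of Gauss--Bonnet, so that the Euler characteristic of a normal surface is a negative multiple of its total combinatorial area; show the combinatorial area of an essential normal surface is strictly positive, whence every essential surface has negative Euler characteristic, ruling out essential spheres, disks, annuli, and tori; then invoke geometrization of Haken manifolds (\refthm{Haken}) to obtain the hyperbolic metric. The trade-off is instructive: the normal-surface argument yields only the existence of a metric, with no geometric relationship to the triangulation $\tau$, whereas your variational route, \emph{when} the maximizer is interior, yields the stronger conclusion that $\tau$ itself is geometric and realizes the metric via \refthm{GluingCompleteness}---but the existence of an angle structure alone does not guarantee this, so your proposal does not prove the theorem as stated without the paper's (or some other) independent argument.
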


The proof has been attributed to Casson, and appears in Lackenby  \cite{Lackenby:Word}. The idea is to consider how essential surfaces intersect each tetrahedron of the triangulation. These surfaces can be isotoped into \emph{normal form}. A surface without boundary in normal form intersects tetrahedra only in triangles and in quads. The angle structure on $M$ can be used to define a combinatorial area on a normal surface. An adaptation of the Gauss--Bonnet theorem implies that the Euler characteristic is a negative multiple of  the combinatorial area. Then one shows that the combinatorial area of an essential surface must always be strictly positive, hence Euler characteristic is strictly negative. Then \refthm{Haken} gives the result.
\medskip

Knots and links that can be shown to be hyperbolic using the tools of \refthm{AngleStruct} include arborescent links, apart from three enumerated families of non-hyperbolic exceptions. This can be shown by constructing an ideal triangulation (or a slightly more general ideal decomposition) of the complement of an arborescent link, and endowing it with an angle structure \cite{FG:Arborescent}.

Conversely, every hyperbolic knot or link complement in $S^3$ admits \emph{some} ideal triangulation with an angle structure \cite{HRS:AnglesHomology}. However, this triangulation is not explicitly constructed, and need not have any relation to the combinatorics of a diagram.

\subsection{Hyperbolic Dehn filling}

Another method for proving that classes of knots or links are hyperbolic is to use Dehn filling. Thurston showed that all but finitely many Dehn fillings on a hyperbolic manifold with a single cusp yield a closed hyperbolic $3$--manifold \cite{thurston:notes}. 

More effective versions of Thurston's theorem have been exploited to show hyperbolicity all but a bounded number of Dehn fillings.  Results in this vein include $2\pi$--theorem that yields negatively curved metrics \cite{BleilerHodgson:2pi}, and geometric deformation theorems of Hodgson and Kerckhoff \cite{HK:univ}. The sharpest result along these lines is the $6$--Theorem, due independently to Agol \cite{Agol:Bounds} and Lackenby \cite{Lackenby:Word}. (The statement below assumes the geometrization conjecture, proved by Perelman shortly after the papers \cite{Agol:Bounds, Lackenby:Word} were published.)

\begin{theorem}[6--Theorem]\label{Thm:6Theorem}
Suppose $M$ is a hyperbolic $3$--manifold homeomorphic to the interior of a compact manifold $\overline{M}$ with torus boundary components $T_1, \dots, T_k$. Suppose $s_1, \dots, s_k$ are slopes, with $s_i\subset T_i$. Suppose there exists a choice of horoball neighborhoods of the cusps of $M$ such that in the induced Euclidean metric on $T_i$, the slope $s_i$ has length strictly greater than $6$, for all $i$. Then the manifold obtained by Dehn filling along $s_1, \dots, s_k$, denoted $M(s_1, \dots, s_k)$, is hyperbolic. 
\end{theorem}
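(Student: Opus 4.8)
The plan is to argue by contradiction, using geometrization to reduce hyperbolicity of the filled manifold to the non-existence of an essential surface of non-negative Euler characteristic, and then to defeat such a surface with the length hypothesis via a combinatorial Gauss--Bonnet estimate. Suppose $N := M(s_1,\dots,s_k)$ is not hyperbolic. If at least one cusp is left unfilled, then $N$ is the interior of a compact manifold with non-empty torus boundary, and \refthm{Haken} supplies an essential sphere, disk, torus, or annulus. If every cusp is filled, $N$ is closed, and Perelman's geometrization (assumed in the hypothesis) says that a non-hyperbolic $N$ is reducible, toroidal, or a small Seifert fibered space; the first two again give an essential sphere or torus, and the last forces a $\ZZ^2 \subset \pi_1(N)$ and hence an essential torus or annulus. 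In all cases we obtain an essential surface $S \subset N$ with $\chi(S) \ge 0$, and the objective is to show this contradicts $\ell(s_i) > 6$.

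First I would place $S$ in good position with respect to the surgery. Writing $V_1,\dots,V_k$ for the filling solid tori with cores $K_i$, so that $M = N \setminus \bigcup \interior V_i$, I isotope $S$ to minimize $|S \cap \bigcup K_i|$; a standard irreducibility/incompressibility argument then forces $S$ to meet each $V_i$ in a collection of meridian disks. Cutting these out yields a surface $S_0 := S \cap M$ properly embedded in the original cusped manifold whose new boundary components are exactly parallel copies of the filling slopes $s_i$ on the cusp tori $T_i$. Since each removed meridian disk contributes a single boundary circle, the number $d$ of disks equals the number of new boundary curves, and $\chi(S) = \chi(S_0) + d$.

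The heart of the proof is a combinatorial area estimate for $S_0$, in the same spirit as the Gauss--Bonnet argument sketched for \refthm{AngleStruct}. I would fix the embedded horoball neighborhoods $C_i$ provided by the hypothesis, so that each $T_i = \bdy C_i$ is a Euclidean torus on which $s_i$ is a geodesic of length $> 6$, and isotope $S_0$ so that inside each cusp it consists of annular collars of its boundary slopes running up toward the cusp. Assigning $S_0$ a combinatorial area $A(S_0)$ adapted to this decomposition, one wants the identity $A(S_0) = -2\pi\,\chi(S_0)$ together with a lower bound showing that each boundary slope contributes more than $2\pi$ to $A(S_0)$. Concretely, the Euclidean geometry of the cusp forces each boundary curve of length $\ell$ to contribute at least $\tfrac{\pi}{3}\,\ell$, so that $A(S_0) \ge \tfrac{\pi}{3}\sum_{c \subset \bdy S_0}\ell(c) > \tfrac{\pi}{3}\cdot 6\,d = 2\pi\,d$. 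Combined with $A(S_0) = -2\pi\,\chi(S_0)$ this gives $\chi(S_0) < -d$, whence $\chi(S) = \chi(S_0) + d < 0$, contradicting $\chi(S) \ge 0$.

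The main obstacle is exactly the sharp constant in the cusp estimate: the threshold $6 = 2\pi/(\pi/3)$ must be extracted from a careful isoperimetric analysis of how a geodesic slope on the Euclidean cusp torus bounds area as it is pushed into the hyperbolic cusp, and pushing the constant all the way down to $6$ (rather than the $2\pi$ of the earlier $2\pi$--theorem) is where essentially all the work lies. I would follow Lackenby's combinatorial bookkeeping for this step \cite{Lackenby:Word}; Agol's independent argument \cite{Agol:Bounds} instead builds a suitable negatively curved or hyperbolic cone metric on $N$ directly, and either route requires the same quantitative input. A secondary subtlety is the small Seifert fibered case in the closed reduction, which is handled by observing that the combinatorial estimate in fact rules out all $\ZZ^2$ subgroups of $\pi_1(N)$, not merely embedded essential tori.
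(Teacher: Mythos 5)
Your quantitative core is sound and matches what the literature actually uses: the constant $6 = 2\pi/(\pi/3)$ is exactly B\"or\"oczky's horoball packing density $3/\pi$ \cite{boroczky}, which is how the paper sketches the key ingredient \refthm{Pleated} ($\ell_C(S)\leq 6|\chi(S)|$ via horocusp areas and Gauss--Bonnet); and the bookkeeping $\chi(S)=\chi(S_0)+d$ with each boundary curve costing more than $6$ is the right engine. (Note the paper itself gives no proof of the 6--Theorem --- it defers entirely to \cite{Agol:Bounds, Lackenby:Word} --- so the fair comparison is against those proofs and the paper's sketch of \refthm{Pleated}.)

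The genuine gap is in your geometrization reduction. Since the theorem fills \emph{every} boundary torus $T_1,\dots,T_k$, the filled manifold $N$ is closed, so your appeal to \refthm{Haken} is vacuous and the closed case is the whole proof --- and there your reduction fails. A non-hyperbolic closed irreducible atoroidal $N$ can be a lens space or another small Seifert fibered space, and these contain \emph{no} embedded essential surface of non-negative Euler characteristic: closed manifolds contain no essential annuli at all, the finite-$\pi_1$ cases (lens space and other spherical fillings) produce no essential surface whatsoever, and $\ZZ^2\subset\pi_1(N)$ yields an embedded essential torus only when the torus theorem's exceptional case is excluded --- that exceptional case being precisely the Seifert fibered manifolds you are trying to rule out. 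So your argument, which only ever defeats an \emph{embedded} surface, never engages with exactly the fillings the theorem must exclude. What Agol and Lackenby actually do is twofold: first, the length-versus-Euler-characteristic estimate is applied to \emph{immersed} $\pi_1$-injective surfaces (pleated surfaces are maps, not embeddings, so essential spheres, tori, and immersed tori carrying $\ZZ^2$ subgroups are all killed uniformly); second, infiniteness and word-hyperbolicity of $\pi_1(N)$ are established by a genuinely different combinatorial Gauss--Bonnet argument on an angled $2$--complex (Lackenby's approach in \cite{Lackenby:Word}), not by any surface in $N$; geometrization then promotes ``irreducible, atoroidal, infinite word-hyperbolic $\pi_1$'' to hyperbolic. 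Your final sentence gestures at this but mislabels it a secondary subtlety and asserts that the embedded normal-surface estimate ``in fact rules out all $\ZZ^2$ subgroups'' --- as written it does not, and repairing that claim (immersed pleated surfaces plus a separate argument for infinite $\pi_1$) is where the real content of the proof lies.
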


 \refthm{6Theorem} can be used to prove that a knot or link is hyperbolic, is as follows. First, show the knot complement $S^3\setminus K$ is obtained by Dehn filling a manifold $Y$ that is known to be hyperbolic. Then, prove that the slopes used to obtain $S^3\setminus K$ from $Y$ have length greater than $6$ on a horoball neighborhood of the cusps of $Y$.  See also \refsec{Cusps} for ways to prove that slopes are long.

\medskip

Some examples of links to which this theorem has been applied include \emph{highly twisted links}, which have diagrams with $6$ or more crossings in every twist region. (See \refdef{TwistNumber}.)
These links can be obtained by surgery, as follows. Start with a fully augmented link as described above, for instance the example shown in \reffig{Augmented}. Performing a Dehn filling along the slope $1/n$ on a crossing circle adds $2n$ crossings to the twist region encircled by that crossing circle, and removes the crossing circle from the diagram. 
When $|n| \geq 3$, the result of such Dehn filling on each crossing circle is highly twisted.

Using the explicit geometry of fully augmented links obtained from the circle packing, we may bound lengths of the slopes $1/n_i$ on crossing circles. Then \refthm{6Theorem} shows that the resulting knots and links must be hyperbolic \cite{FuterPurcell}.

Other examples can also be obtained in this manner. For example, Baker showed that infinite families of Berge knots are hyperbolic by showing they are Dehn fillings of minimally twisted chain link complements, which are known to be hyperbolic, along sequences of slopes that are known to grow in length  \cite{Baker:HypBerge}. 

The $6$--Theorem is sharp. This was shown by Agol \cite{Agol:Bounds}, and by Adams and his students for a knot complement \cite{AdamsEtAl:sharp}. The pretzel knot $P(n,n,n)$, which has $3$ twist regions, and the same number of crossings in each twist region, has a toroidal Dehn filling along a slope with length exactly $6$. 

\subsection{Fibered knots and high distance knots}

We finish this section with a few remarks about other ways to prove manifolds are hyperbolic, and give references for further information. However, these methods seem less directly applicable to knots in $S^3$ than those discussed above, and the full details are beyond the scope of this paper.

Recall \refdef{Fibered} of a fibered knot. When the monodromy is pseudo-Anosov, the knot complement is known to be hyperbolic \cite{Thurston:Fiber}. The figure--8 knot complement can be shown to be hyperbolic in this way; see for example~\cite[page~70]{thurston:notes}. Certain links obtained as the complement of closed braids and their braid axis have also been shown to be hyperbolic using these methods \cite{HironakaKin}. It seems difficult to apply these methods directly to knots, however.

Another method is to consider bridge surfaces of a knot. Briefly, there is a notion of distance that measures the complexity of the bridge splitting of a knot. Bachman and Schleimer proved that any knot whose bridge distance is at least 3 must be hyperbolic \cite{BachmanSchleimer}. It seems difficult to bound bridge distance for classes of examples directly from a knot diagram. Recent work of Johnson and Moriah is the first that we know to obtain such bounds \cite{JohnsonMoriah}.

\section{Volumes}\label{Sec:Volume}

As mentioned in the introduction, the goal of effective geometrization is 
 to determine or estimate geometric invariants directly from a diagram. As volume is the first and most natural invariant of a hyperbolic manifold, the problem of estimating volume from a diagram has received considerable attention. In this section, we survey some of the results and techniques on both upper and lower bounds on volume.

\subsection{Upper bounds on volume}
Many bounds in this section involve constants with geometric meaning. In particular, we define
\[
\vtet = \text{volume of a regular ideal tetrahedron in } \HH^3  = 1.0149 \ldots
\]
and
\[
\voct = \text{volume of a regular ideal octahedron in } \HH^3 = 3.6638 \ldots
\]
These constants are useful in combinatorial upper bounds on volume because every geodesic tetrahedron in $\HH^3$ has volume at most $\vtet$, and every geodesic octahedron has volume at most $\voct$. See e.g.\ Benedetti and Petronio \cite{BenedettiPetronio}.

\subsubsection{Bounds in terms of crossing number}
The first volume bounds for hyperbolic knots are due to Adams \cite{Adams:thesis}. 
He showed that, if $D = D(K)$ is a diagram of a hyperbolic knot or link with $c \geq 5$ crossings, then 
\begin{equation}\label{Eqn:AdamsFirst}
\vol(S^3 \setminus K) \leq 4 (c(D) - 4) \vtet.
\end{equation}
Adams' method of proof was to use the knot diagram to divide $S^3 \setminus K$ into tetrahedra with a mix of ideal and material vertices, and to count the tetrahedra. Since the subdivision contains at most $4(c(D) - 4)$ tetrahedra, and each tetrahedron has volume at most $\vtet$, the bound follows.

In a more recent paper \cite{Adams:triple-crossing-number}, Adams improved the upper bound of \refeqn{AdamsFirst}:

\begin{theorem}\label{Thm:AdamsV8}
Let $D = D(K)$ be a diagram of a hyperbolic link $K$, with at least $5$ crossings. Then
\[
\vol(S^3 \setminus K) \leq  (c(D) - 5) \voct + 4 \vtet.
\]
\end{theorem}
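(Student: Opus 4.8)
The plan is to construct an explicit ideal polyhedral decomposition of $S^3 \setminus K$ directly from the diagram $D$, where the building blocks are combinatorial octahedra associated with crossings, and then to bound the volume of each piece by comparing it with a regular ideal octahedron or tetrahedron. The improvement over \refeqn{AdamsFirst} comes from decomposing the complement into octahedra (contributing at most $\voct$ each) rather than further subdividing each octahedron into tetrahedra, which loses volume because $\voct < 4\vtet$ is false in the naive count but the sharper bookkeeping of octahedra avoids the inefficiency.

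First I would recall the standard polyhedral decomposition of an alternating-style diagram, in the spirit of Menasco's construction described in \refsec{Hyperbolic}: place a combinatorial ideal octahedron at each crossing, with its four ``equatorial'' vertices lying on the strands of the link and its top and bottom vertices in the upper and lower balls bounded by the plane of projection. Gluing these octahedra across the regions of the diagram produces a decomposition of $S^3 \setminus K$ into $c(D)$ ideal octahedra. Next I would observe that this raw count gives $\vol(S^3 \setminus K) \le c(D)\,\voct$, which is too weak; the sharpening of the constant term from $c(D)\,\voct$ down to $(c(D)-5)\voct + 4\vtet$ must come from a more careful treatment of a few octahedra near the ``outside'' of the diagram. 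The idea is that some octahedra degenerate or can be replaced by a smaller arrangement of ideal tetrahedra because of how faces on the boundary of the projection sphere get identified, and collapsing or flattening these boundary pieces replaces several units of $\voct$ by a bounded number of $\vtet$ contributions.

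The key steps, in order, are: (1) build the $c(D)$-octahedron decomposition and verify it is a genuine ideal polyhedral decomposition of $S^3 \setminus K$; (2) invoke the general principle that the hyperbolic volume of a manifold is bounded above by the sum of the maximal volumes of its combinatorial pieces, since each geodesic ideal octahedron has volume at most $\voct$ and each geodesic ideal tetrahedron at most $\vtet$ (this is the straightening/volume-maximization argument, as in Benedetti--Petronio \cite{BenedettiPetronio}); and (3) perform the boundary bookkeeping that converts $5$ of the octahedra into an arrangement whose total volume bound is $4\vtet$, yielding the stated inequality. Throughout, the hypothesis $c(D) \ge 5$ guarantees there are enough crossings for this boundary modification to make sense.

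\textbf{The main obstacle} I anticipate is step (3): the careful combinatorial argument showing exactly which octahedra can be collapsed or replaced, and verifying that the replacement genuinely costs at most $4\vtet$ rather than more. This requires understanding how the faces of the octahedra near the boundary of the projection region are identified and why the straightening map does not increase volume beyond the claimed bound; the constant $5$ and the term $4\vtet$ are delicate and presumably arise from analyzing a specific local configuration. A secondary subtlety is ensuring the volume-maximization step is valid even when the combinatorial decomposition is not the geometric (canonical) one — one must invoke that straightening the cells to geodesic cells can only increase volume toward the maximum, so the sum of maximal cell volumes dominates $\vol(S^3 \setminus K)$. I would lean on Adams' earlier decomposition techniques \cite{Adams:thesis} and the general convexity/straightening results to handle these points rather than reconstructing them from scratch.
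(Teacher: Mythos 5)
Your proposal follows essentially the same route as the paper's proof sketch: D.~Thurston's one-octahedron-per-crossing decomposition of $S^3 \setminus K$, straightening the cells in the hyperbolic structure, and bounding each straightened piece by $\voct$ or $\vtet$, with the $5\voct - 4\vtet$ savings coming from careful bookkeeping at a few cells --- exactly the argument the paper indicates before deferring the details to Adams \cite{Adams:triple-crossing-number}. Two small corrections: in this decomposition the top and bottom apexes of the octahedra are material (finite) vertices, all identified to two cone points above and below the projection plane, rather than ideal vertices (which only helps the volume bound), and the inequality $\voct < 4\vtet$ is in fact true --- that is precisely why keeping octahedra intact beats subdividing each into four tetrahedra.
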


Again, the method is to divide the link complement into a mixture of tetrahedra and octahedra, and to bound the volume of each polyhedron by $\vtet$ or $\voct$ respectively. The subdivision into octahedra was originally described by D.\ Thurston.

\begin{figure}
\begin{overpic}{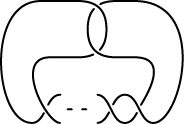}
\put(95,0){$K_n$}
\end{overpic}
\hspace{0.5in}
\begin{overpic}{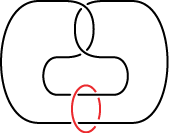}
\put(95,0){$L$}
\end{overpic}
\caption{Every twist knot $K_n$ has two twist regions, consisting of $2$ and $n$ crossings. Every $K_n$ can be obtained by Dehn filling the red component of the Whitehead link $L$, depicted on the right.}
\label{Fig:TwistKnot}
\end{figure}

The upper bound of \refthm{AdamsV8} is known to be asymptotically sharp, in the sense that there exist diagrams of knots and links $K_n$ with $\vol(S^3\setminus K_n)/c(K_n) \to \voct$ as $n\to \infty$; see  \cite{CKP:gmax}. On the other hand, this upper bound can be arbitrarily far from sharp. A useful example is the sequence of twist knots $K_n$ depicted in \reffig{TwistKnot}. Since the number of crossings is $n+2$, the upper bound of \refthm{AdamsV8} is linear in $n$. However, the volumes of $K_n$ are universally bounded and only increasing to an asymptotic limit:
\[
\vol(S^3 \setminus K_n) < \voct, \qquad \lim_{n \to \infty} \vol(S^3 \setminus K_n) = \voct
\]
This holds as a consequence of the following theorem of Gromov and Thurston \cite[Theorem 6.5.6]{thurston:notes}.

\begin{theorem}\label{Thm:VolumeDecreases}
Let $M$ be a finite volume hyperbolic manifold with cusps. Let $N = M(s_1, \ldots, s_n)$ be a Dehn filling of some cusps of $M$. Then $\vol(N) < \vol(M)$.
\end{theorem}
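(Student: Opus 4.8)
The natural tool is the \emph{Gromov norm} (simplicial volume) $\|\cdot\|$, together with the Gromov--Thurston proportionality principle, which asserts that for a complete finite-volume hyperbolic $3$--manifold $W$ one has $\vol(W) = \vtet\,\|W\|$, where $\|W\|$ is computed with locally finite (relative) chains when $W$ has cusps. Applying this to $M$ and to the hyperbolic part of the filled manifold $N$ --- interpreting $\vol(N) = \vtet\,\|N\|$ in general, so that any Seifert-fibered or graph pieces created by a non-hyperbolic filling contribute nothing, since their simplicial volume vanishes --- the theorem reduces to the purely topological statement that Dehn filling strictly decreases simplicial volume:
\[
\|N\| < \|M\|.
\]

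First I would establish the \emph{weak} inequality $\|N\| \le \|M\|$. Write $N = \overline{M}\cup(V_1\cup\dots\cup V_n)$, where each $V_i$ is a solid torus glued to a boundary torus $T_i=\bdy_i\overline{M}$ along the slope $s_i$. Given a relative fundamental cycle $z$ for $(\overline{M},\bdy\overline{M})$ with $\|z\|_1$ close to $\|M\|=\|\overline{M},\bdy\overline{M}\|$, its boundary $\bdy z$ is a cycle carried by the tori $T_i$. Because a torus admits self-maps of arbitrarily high degree, $\|T^2\|=0$, so $\bdy z$ bounds a $3$--chain $w$ supported in $V_1\cup\dots\cup V_n$ with $\|w\|_1$ arbitrarily small. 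Then $z+w$ is an absolute fundamental cycle for $N$ with $\|N\|\le \|z\|_1+\|w\|_1$, and taking infima gives $\|N\|\le\|M\|$, hence $\vol(N)\le\vol(M)$.

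The main obstacle is \emph{strictness}. Here the idea is to exploit the geometry of an efficient fundamental cycle. By the straightening procedure behind the proportionality principle, a norm-minimizing fundamental cycle is represented (in the limit, or as a measure cycle) by geodesic simplices whose volumes approach $\vtet$, that is, by simplices that are nearly regular and ideal --- and the regular ideal simplex is the \emph{unique} maximizer of volume among geodesic $3$--simplices. In the cusped manifold $M$ these near-ideal simplices genuinely escape out the cusps. In $N$, however, the former cusp has been replaced by an embedded tube around the core geodesic $\gamma_i$ of $V_i$, a region of uniformly bounded geometry: any geodesic simplex meeting a fixed neighborhood of $\gamma_i$ is bounded away from the ideal regular shape and so carries a definite volume deficit. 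Quantifying the total deficit forced on the part of the cycle that must cover these tubes --- and showing it does not vanish as one passes to ever more efficient cycles --- yields $\vol(N) < \vtet\,\|M\| = \vol(M)$. Making this estimate rigorous is the crux: it requires the uniqueness of the regular ideal simplex as volume maximizer, together with a compactness argument controlling how the (measure) cycle distributes mass near the core geodesics, which is precisely the content of Thurston's argument.

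Finally, I would record an alternative analytic route when $N$ is itself hyperbolic: along the one-parameter family of (singular) hyperbolic cone structures interpolating from $M$ to $N$, Schl\"afli's variation formula shows the volume is strictly monotonically decreasing, reproving strictness for sufficiently long fillings via the Neumann--Zagier asymptotics. Only the Gromov-norm argument, however, handles all fillings uniformly --- including those for which $N$ fails to be hyperbolic --- so I would present that as the main line of proof and mention the deformation argument only as corroboration.
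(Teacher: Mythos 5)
Your proposal is correct and follows essentially the same route as the paper, which offers no independent argument but cites Gromov--Thurston \cite[Theorem 6.5.6]{thurston:notes}: that proof is precisely the simplicial-volume argument you outline, namely the proportionality $\vol = \vtet\,\|\cdot\|$, the weak inequality $\|N\| \leq \|M\|$ obtained by capping a relative fundamental cycle with low-norm chains in the solid tori (using $\|T^2\| = 0$), and strictness via the uniqueness of the regular ideal simplex as the volume maximizer, with efficient cycles forced to carry a definite volume deficit near the core geodesics. Your closing remark correctly identifies the Schl\"afli/cone-deformation argument as a separate, quantitative route (the one underlying Theorem~\ref{Thm:VolChange} in this survey), applicable only to sufficiently long fillings.
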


Returning to the case of twist knots, every $K_n$ can be obtained by Dehn filling on one component of the Whitehead link $L$, shown in \reffig{TwistKnot}, right. \refthm{VolumeDecreases} implies
$\vol(S^3 \setminus K_n) < \vol(S^3 \setminus L) = \voct$.

\subsubsection{Bounds in terms of twist number}

Following the example of twist knots in \reffig{TwistKnot}, it makes sense to seek upper bounds on volume in terms of the twist number $t(K)$ of a knot $K$ (see \refdef{TwistNumber}), rather than the crossing number alone. 

The following result combines the work of Lackenby \cite{lackenby:alt-volume} with an improvement by Agol and D.\ Thurston \cite[Appendix]{lackenby:alt-volume}.

\begin{theorem}\label{Thm:LATUpperBound}
Let $D(K)$ be a diagram of a hyperbolic link $K$. Then
\[
\vol(S^3 \setminus K) \leq 10 (t(D) - 1) \vtet.
\]
Furthermore, this bound is asymptotically sharp, in the sense that there exist knot diagrams $D_n = D(K_n)$ with $\vol(S^3 \setminus K_n)/t(D_n) \to 10 \vtet$.
\end{theorem}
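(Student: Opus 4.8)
The plan is to bound the volume by \emph{augmenting} the diagram and then invoking the Dehn filling inequality of \refthm{VolumeDecreases}. After flypes and the removal of superfluous crossings we may assume $D$ is prime and twist reduced (\refdef{Prime}, \refdef{TwistReduced}), so that the augmentation construction of \refsec{CirclePack} applies. To each of the $t = t(D)$ twist regions add a crossing circle, and then remove full twists; this produces a fully augmented link $L$ whose complement is homeomorphic to that of the augmented diagram and is hyperbolic precisely because $D$ is prime and twist reduced. By construction $S^3 \setminus K$ is obtained from $S^3 \setminus L$ by Dehn filling each crossing circle along a slope $1/n_i$ that reinserts the corresponding twist region, so \refthm{VolumeDecreases} gives
\[
\vol(S^3 \setminus K) < \vol(S^3 \setminus L).
\]
It therefore suffices to bound the right-hand side by $10(t-1)\vtet$.

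Next I would use the polyhedral decomposition from \refsec{CirclePack}: the circle packing cuts $S^3 \setminus L$ into two isometric right-angled ideal polyhedra $P$ and $P'$, so $\vol(S^3 \setminus L) = 2\,\vol(P)$. Because $P$ is ideal and right-angled, every ideal vertex is $4$-valent (four faces meet in a Euclidean rectangle), so if $V, E, F$ denote its numbers of vertices, edges and faces, then $2E = 4V$ and Euler's formula forces $F = V + 2$. Tracing the construction back through the nerve of the circle packing expresses $V$ (equivalently $F$) as a linear function of the number $t$ of crossing circles. The volume of $P$ is then estimated by subdividing it into geodesic ideal tetrahedra and bounding each by $\vtet$, where the number of tetrahedra needed is controlled by the face/vertex count just obtained.

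The heart of the argument, and the step I expect to be the main obstacle, is making this tetrahedral count \emph{sharp} enough to produce the constant $10$ rather than a cruder linear bound. A naive coning decomposition of $P$ wastes tetrahedra; the improvement of Agol and D.\ Thurston instead subdivides the right-angled polyhedra carefully, so that the two copies $P, P'$ together use at most $10(t-1)$ geodesic tetrahedra, each of volume at most $\vtet$. Assembling this with the previous two steps yields
\[
\vol(S^3 \setminus K) < \vol(S^3 \setminus L) = 2\,\vol(P) \leq 10(t-1)\vtet,
\]
which is the claimed bound. The appearance of $t-1$ rather than $t$, and the validity of the low-complexity base case, come out of the Euler-characteristic bookkeeping, where one face is effectively free.

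Finally, for asymptotic sharpness I would exhibit an explicit sequence of diagrams $D_n = D(K_n)$ in which every twist region contributes essentially the maximal possible volume, so that the tetrahedral count above is asymptotically attained. For such a family the upper bound gives $\vol(S^3 \setminus K_n) \leq 10(t(D_n)-1)\vtet$, while a matching lower bound of the form $\vol(S^3 \setminus K_n) \geq (10\vtet - \epsilon_n)\,t(D_n)$, obtained from the volume lower bounds surveyed later in \refsec{Volume} (via guts or via angle structures), forces $\vol(S^3 \setminus K_n)/t(D_n) \to 10\vtet$. The delicate point here is selecting the family so that the combinatorial count is simultaneously tight in both the upper and the lower estimates; the construction and the matching bound are carried out in the references.
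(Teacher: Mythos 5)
Your proposal follows essentially the same route as the paper's sketch: construct the fully augmented link $L$ from the twist regions, bound $\vol(S^3\setminus L)$ by counting tetrahedra in its explicit polyhedral decomposition (with the sharp count of $10(t-1)$ tetrahedra deferred to Agol and D.~Thurston, exactly as the paper defers it to the appendix of \cite{lackenby:alt-volume}), and then recover $S^3\setminus K$ by Dehn filling the crossing circles along slopes $1/n_i$ and apply \refthm{VolumeDecreases}. The asymptotic sharpness is likewise cited rather than proved in the survey, so your deferral there is consistent with the paper's treatment.
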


The method of proof is as follows. First, one constructs a \emph{fully augmented link} $L$, by adding an extra component for each twist region of $D(K)$ (see \reffig{Augmented}). As described in \refsec{CirclePack}, the  link complement $S^3 \setminus L$ has simple and explicit combinatorics, making it relatively easy to bound $\vol(S^3 \setminus L)$ by counting tetrahedra. Then,
 \refthm{VolumeDecreases} implies that the same upper bound on volume applies to $S^3 \setminus K$.

As a counterpart to the asymptotic sharpness of \refthm{LATUpperBound}, there exist sequences of knots where $t(K_n) \to \infty$ but $\vol(S^3 \setminus K_n)$ is universally bounded. One family of such examples is the \emph{double coil knots} studied by the authors \cite{fkp:coils}.

Subsequent refinements or interpolations between \refthm{AdamsV8} and \refthm{LATUpperBound}  have been found by Dasbach and Tsvietkova \cite{DasbachTsvietkova, DasbachTsvietkova:simplicial} and Adams \cite{Adams:bipyramids}. These refinements produce a smaller upper bound compared to that of \refthm{LATUpperBound} when the diagram $D(K)$ has both twist regions with many crossings and with few crossings. However, the worst case scenario for the multiplicative constant does not improve due to the asymptotic sharpness of Theorems~\ref{Thm:AdamsV8} and~\ref{Thm:LATUpperBound}.

\subsection{Lower bounds on volume}

By results of Jorgensen and Thurston \cite{thurston:notes}, the volumes of hyperbolic $3$--manifolds are well-ordered. It follows that every family of hyperbolic $3$--manifolds (e.g.\ link complements; fibered knot complements, knot complements of genus 3, etc.) contains finitely many members realizing the lowest volume. Gabai, Meyerhoff, and Milley \cite{gmm:smallest-cusped} showed that the three knot complements of lowest volume are the figure-8 knot, the $5_2$ knot, and the $(-2,3,7)$ pretzel, whose volumes are
\begin{equation}\label{Eqn:LowestKnots}
\vol(4_1) = 2 \vtet = 2.0298\ldots, \qquad
\vol( 5_2) = \vol( P(-2,3,7)) = 2.8281 \ldots.
\end{equation}
Agol \cite{agol:multicusp} showed that the two multi-component links of lowest volume are the Whitehead link and the $(-2,3,8)$ pretzel link, both of which have volume $\voct = 3.6638 \ldots$. 
Yoshida \cite{Yoshida:Smallest4Cusped} has identified the smallest volume link of $4$ components, with volume $2 \voct$. 
Beyond these entries, lower bounds applicable to \emph{all} knots  (or  \emph{all} links) become scarce. Not even the lowest volume link of $3$ components is known to date.

Nevertheless, there are several practical methods of obtaining diagrammatic lower bounds on the volume of a knot or link, each of which applies to an infinite family of links, and each of which produces \emph{scalable} lower bounds that become larger as the complexity of a diagram becomes larger.
We survey these methods below.

\subsubsection{Angle structures} Suppose that $S^3 \setminus K$ has an ideal triangulation $\tau$ supporting an  angle structure $\theta$.
(Recall \refdef{AngleStruct}.) Every ideal tetrahedron of $\tau$, supplied with angles via $\theta$, has an associated volume. As a consequence, one may naturally define a volume $\vol(\theta)$ by summing the volumes of the individual tetrahedra.

\begin{conjecture}[Casson]\label{Conj:Casson}
Let $\tau$ be an ideal triangulation of a hyperbolic manifold $M$, which supports an angle structure $\theta$. Then
\[
\vol(\theta) \leq \vol(M),
\]
with equality if and only if $\theta$ solves the gluing equations and gives the complete hyperbolic structure.
\end{conjecture}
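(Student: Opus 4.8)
The plan is to attack this via the \emph{Casson--Rivin program}: realize $\vol$ as a functional on the whole space of angle structures and show it is maximized exactly at the complete structure. Let $\mathcal{A}(\tau) \subset \RR^{3N}$ denote the set of angle assignments (three angles per tetrahedron, $N$ tetrahedra) satisfying the linear conditions defining an angle structure: each triple sums to $\pi$, each angle lies in $(0,\pi)$, and the angles around every edge sum to $2\pi$ (the imaginary part of \refeqn{LogGluing}). This is the interior of a convex polytope, nonempty by hypothesis. On each tetrahedron the hyperbolic volume is $\Lambda(x) + \Lambda(y) + \Lambda(z)$, where $\Lambda(\theta) = -\int_0^\theta \log\abs{2\sin t}\,dt$ is the Lobachevsky function, so $\vol(\theta) = \sum_{\Delta}\bigl(\Lambda(x_\Delta) + \Lambda(y_\Delta) + \Lambda(z_\Delta)\bigr)$, and \refconj{Casson} becomes the assertion that $\max_{\mathcal{A}(\tau)} \vol = \vol(M)$, attained only at the complete structure.

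The first key step is \textbf{strict concavity} of $\vol$ on $\mathcal{A}(\tau)$. Since $\Lambda''(\theta) = -\cot\theta$, restricting one tetrahedron's contribution to the triangle $\{x+y+z=\pi\}$ and eliminating $z = \pi - x - y$, the Hessian in $(x,y)$ is
\[
\begin{pmatrix} -\cot x - \cot z & -\cot z \\ -\cot z & -\cot y - \cot z \end{pmatrix}.
\]
Its $(1,1)$ entry equals $-\sin y/(\sin x \sin z) < 0$, while the identity $\cot x \cot y + \cot y \cot z + \cot z \cot x = 1$, valid when $x+y+z = \pi$, shows its determinant equals $1 > 0$. Hence each tetrahedron's volume is strictly concave in its angles, and summing gives strict concavity of $\vol$ on the convex set $\mathcal{A}(\tau)$. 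A strictly concave function on a convex domain has at most one critical point, which is then its unique global maximum.

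Next I would \textbf{identify the critical points}. Using $\Lambda'(\theta) = -\log\abs{2\sin\theta}$ and Lagrange multipliers against the tetrahedron- and edge-sum constraints, the stationarity condition on the interior of $\mathcal{A}(\tau)$ reduces exactly to the \emph{real part} of the edge gluing equations \refeqn{Gluing} (the log-modulus relations $\sum_i \log\abs{z_i} = 0$ around each edge). Because any angle structure already satisfies the imaginary parts, an interior critical point is a genuine solution of the gluing equations; by \refthm{GluingCompleteness} it is the complete hyperbolic structure on $M$, of volume $\vol(M)$ by Mostow--Prasad rigidity. Combined with strict concavity, this gives $\vol(\theta) \le \vol(M)$ with equality precisely at the complete structure, \emph{provided that structure lies in the interior of $\mathcal{A}(\tau)$}.

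\textbf{The main obstacle} is exactly the italicized proviso. The hypothesis guarantees only that $\tau$ supports \emph{some} angle structure, not that $\tau$ is \emph{geometric}: the complete structure may assign flat or negatively oriented tetrahedra to simplices of $\tau$, placing the gluing solution outside $\overline{\mathcal{A}(\tau)}$. Then there is no interior critical point, the supremum of $\vol$ is approached on the boundary where some tetrahedron degenerates, and one must still establish the strict inequality $\sup_{\mathcal{A}(\tau)} \vol < \vol(M)$ by independent means --- for instance, a degeneration argument controlling boundary values, or comparison of $\tau$ with a geometric triangulation along a path of Pachner moves while tracking the change in $\vol$. Obtaining such a bound in full generality is precisely what keeps \refconj{Casson} open; the concavity machinery above disposes of it cleanly only when the triangulation is geometric.
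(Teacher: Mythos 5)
The statement you were asked to compare against is labelled a \emph{conjecture}, and the paper treats it as one: it offers no proof, remarking only that \refconj{Casson} is open in general and known when the triangulation $\tau$ is \emph{geometric}, by the theorem of Casson and Rivin that the angle structure solving the gluing equations uniquely maximizes volume over the angle-structure polytope. Your proposal is therefore correctly calibrated rather than gapped relative to the paper: your concavity computation (the Hessian with $(1,1)$ entry $-\sin y/(\sin x \sin z)$ and determinant $1$ via the cotangent identity on $x+y+z=\pi$) is exactly the standard proof that the volume functional is strictly concave on $\mathcal{A}(\tau)$, your critical-point analysis reconstructs the Casson--Rivin theorem the paper cites for the geometric case, and your final paragraph names precisely the obstruction that keeps the conjecture open: when $\tau$ is not geometric there is no interior critical point, the supremum of $\vol$ is approached on the boundary of $\mathcal{A}(\tau)$ where tetrahedra degenerate, and the strict bound $\sup_{\mathcal{A}(\tau)}\vol < \vol(M)$ has no known proof there. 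So you have not proved the statement, but no one has; your account of what is known and what is missing matches the paper's.

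One technical repair is needed in your critical-point step. Vanishing of the volume gradient along the tangent directions you describe yields the real parts of the \emph{edge} gluing equations, but \refthm{GluingCompleteness} requires the completeness equations as well, and these do not follow from the edge equations alone: a positively oriented solution of the edge equations can give an incomplete metric. In the actual Casson--Rivin argument this is handled by differentiating $\vol$ along the \emph{leading--trailing deformations} associated to curves on the cusp tori; these deformations lie in the tangent space of $\mathcal{A}(\tau)$, and criticality along them produces exactly the completeness equations. With that addition, your proof of the geometric special case is complete, and your conclusion that the equality case then follows from unique maximization is correct.
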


While \refconj{Casson} is open in general, it is known to hold if the triangulation $\tau$ is \emph{geometric}, meaning that some (possibly different) angle structure $\theta'$ solves the gluing equations on $\tau$. In this case, a theorem of Casson and Rivin \cite{fg:angled-survey, rivin:volume} says that $\theta'$ uniquely maximizes volume over all angle structures on $\tau$, implying in particular that $\vol(\theta) \leq \vol(\theta') = \vol(M)$.

In particular, the known case of \refconj{Casson} has been applied to the family of $2$--bridge links. In this case, the link complement has a natural angled triangulation whose combinatorics is closely governed by the link diagram \cite[Appendix]{GueritaudFuter:2bridge}. It follows that, for a sufficiently reduced diagram $D$ of a $2$--bridge link $K$,
\begin{equation}\label{Eqn:2BridgeVolume}
2 \vtet t(D) - 2.7066 \leq \vol(S^3 \setminus K) \leq 2\voct (t(D) - 1),
\end{equation}
which both sharpens the upper bound of \refthm{LATUpperBound} and proves a comparable lower bound.

There are rather few other families where this method has been successfully applied. One is the weaving knots studied by Champanerkar, Kofman, and Purcell \cite{CKP:Weaving}.

In the spirit of open problems, we mention the family of fibered knots and links. Agol showed that these link complements admit combinatorially natural \emph{veering triangulations} \cite{Agol:veering}, which have angle structures with nice properties \cite{HRST:veering, FG:veering}. A proof of \refconj{Casson}, even for this special family, would drastically expand the list of link complements for which we have practical, combinatorial volume estimates. See Worden \cite{Worden:experiment} for more on this problem.

\subsubsection{Guts} One powerful method of estimating the volume of a Haken $3$--manifold was developed by Agol, Storm, and Thurston \cite{AST:guts}, building on previous work of Agol \cite{Agol:guts}.

\begin{definition}
Let $M$ be a Haken hyperbolic $3$--manifold and $S \subset M$ a properly embedded essential surface. We use the symbol $M \cut S$ to denote the complement in $M$ of a collar neighborhood of $S$. Following the work of Jaco, Shalen, and Johannson \cite{jaco-shalen, johannson}, there is a canonical way to decompose $M \cut S$ along essential annuli into three types of pieces:
\begin{itemize}
\item $I$--bundles over a subsurface $\Sigma \subset S$,
\item Seifert fibered pieces, which are necessarily solid tori when $M$ is hyperbolic,
\item all remaining pieces, which are denoted $\guts(M,S)$.
\end{itemize}
Thurston's hyperbolization theorem (a variant of \refthm{Haken}) implies that $\guts(M,S)$ admits a hyperbolic metric with totally geodesic boundary. By Miyamoto's theorem \cite{miyamoto},  this metric with geodesic boundary has volume at least $\voct \abs{ \chi(\guts(M,S))  }$, where $\chi$ denotes Euler characteristic. 
\end{definition}

Agol, Storm, and Thurston showed \cite{AST:guts}:

\begin{theorem}\label{Thm:ASTGuts}
Let $M$ be a Haken hyperbolic $3$--manifold and $S \subset M$ a properly embedded essential surface. Then
\[
\vol(M) \geq \voct \abs{ \chi(\guts(M,S))  }.
\]
\end{theorem}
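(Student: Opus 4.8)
The plan is to split the statement into a purely geometric input, already recorded in the definition above, and a volume comparison that carries the real content. Miyamoto's theorem gives that the hyperbolic metric with totally geodesic boundary on $G := \guts(M,S)$ has volume at least $\voct\,\abs{\chi(G)}$, so it suffices to prove
\[
\vol(M) \geq \vol(G),
\]
where $\vol(G)$ denotes the volume of this geodesic-boundary metric. My whole effort would go into this inequality.

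To get at $\vol(G)$, I would double. Let $DG$ be the double of $G$ along its totally geodesic boundary: gluing two totally geodesic copies produces a genuine finite-volume hyperbolic manifold with $\vol(DG) = 2\vol(G)$. On the other side, cut $M$ along $S$ and double the compact manifold $M \cut S$ along its two copies of $S$ to obtain a closed (or cusped) $3$--manifold $DM$. Pulling back the hyperbolic metric on $M$ to each half equips $DM$ with a metric of volume exactly $2\vol(M)$; this metric is hyperbolic away from $S$, but in general has a corner along $S$, since $S$ need not be totally geodesic in $M$. I would then compare $DM$ and $DG$ through simplicial volume (Gromov norm) $\|\cdot\|$. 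The characteristic submanifold decomposition used to define $G$ persists after doubling: the $I$--bundle and Seifert (solid torus) pieces double into graph-manifold pieces, which contribute nothing to the Gromov norm, while the guts double to realize $DG$ as the union of hyperbolic pieces of the geometric (torus) decomposition of $DM$. Additivity of the Gromov norm along the decomposing tori (Gromov--Soma) then gives $\|DM\| \geq \|DG\|$, and since $DG$ is hyperbolic, Gromov--Thurston yields $\vtet\,\|DG\| = \vol(DG) = 2\vol(G)$.

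It remains to bound $\vtet\,\|DM\|$ above by the volume $2\vol(M)$ of the doubled metric, and this is the crux and the main obstacle. The difficulty is precisely that the doubled metric is only hyperbolic off the crease locus $S$, so one cannot invoke Mostow rigidity directly. To control the corner I would first replace $S$ by a least-area representative in its isotopy class, which exists because $S$ is essential; this makes $S$ minimal, so that the concentrated curvature along the crease has the sign needed to preserve a lower bound on scalar curvature. I would then smooth the corner with volume cost tending to $0$ while keeping the scalar curvature bounded below by $-6$, and invoke Perelman's monotonicity of the $\bar\lambda$--invariant under Ricci flow with surgery, in the form that any metric on $DM$ with scalar curvature $\geq -6$ has volume at least $\vtet\,\|DM\|$.

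Granting the analytic input of the last paragraph, the chain closes: $2\vol(M) \geq \vtet\,\|DM\| \geq \vtet\,\|DG\| = 2\vol(G)$, so $\vol(M) \geq \vol(G)$, and combining with Miyamoto's bound gives $\vol(M) \geq \voct\,\abs{\chi(\guts(M,S))}$. I expect the bookkeeping of the first three paragraphs (the doubling, the persistence of the characteristic submanifold, and additivity of the Gromov norm) to be routine given geometrization, whereas the smoothing of the minimal-surface corner while maintaining the scalar-curvature lower bound, together with the correct application of Perelman's monotone invariant, is where the genuine work lies.
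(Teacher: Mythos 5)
Your proposal is correct and follows essentially the same route as the paper's sketch of the Agol--Storm--Thurston argument: pass to a least-area representative of $S$, double $M \cut S$ along its minimal boundary, control the non-smooth metric via Perelman's work on Ricci flow with surgery (the scalar-curvature/$\bar\lambda$ volume bound), and account for the characteristic submanifold pieces, which carry no simplicial volume, so that only the doubled guts contribute, with Miyamoto's theorem supplying the final bound $\voct\abs{\chi(\guts(M,S))}$. You have also correctly located the genuine analytic difficulty --- smoothing the crease along the minimal surface while preserving the lower scalar curvature bound before invoking Perelman's monotonicity --- which is exactly where the work lies in \cite{AST:guts}.
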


The proof of \refthm{ASTGuts} relies on geometric estimates due to Perelman. Agol, Storm, and Thurston double $M \cut S$ along its boundary and apply Ricci flow with surgery. 
They show that the metric on $\guts(M,S)$ converges to the one with totally geodesic boundary, while volume decreases, and while the metric on the remaining pieces shrinks away to volume $0$.

\refthm{ASTGuts} has been applied to several large families of knots. For alternating knots and links, Lackenby computed the guts of checkerboard surfaces in an alternating diagram \cite{lackenby:alt-volume}. Combined with
 \refthm{LATUpperBound} and \refthm{ASTGuts}, this implies: 

\begin{theorem}\label{Thm:LackenbyVolAlt}
  Let $D$ be a prime alternating diagram of a hyperbolic link $K$ in $S^3$. Then
\begin{equation*}
\frac{\voct}{2} (t(D)-2) \leq \vol(S^3 \setminus K) \leq 10\vtet (t(D) - 1),
\end{equation*}
\end{theorem}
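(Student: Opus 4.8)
The upper bound requires no new work: it is precisely the statement of \refthm{LATUpperBound}, which applies to every diagram of a hyperbolic link and in particular to the prime alternating diagram $D$. So the entire content lies in the lower bound, and my plan is to produce an essential surface $S \subset M = S^3 \setminus K$ whose guts carries enough negative Euler characteristic, and then feed this into \refthm{ASTGuts}. The natural candidates for $S$ are the two checkerboard surfaces of the alternating diagram.

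First I would record the classical fact that, because $D$ is prime and alternating, both checkerboard surfaces are essential in the sense of \refdef{Essential}: the boundary $\widetilde{S}$ of a regular neighborhood is incompressible and $\bdy$-incompressible. This is standard and is most cleanly read off from Menasco's polyhedral decomposition of the alternating complement \cite{Menasco:Polyhedra}, which cuts $M$ into two ideal polyhedra whose combinatorics mirror the diagram and in which the checkerboard surfaces sit in normal form. Having fixed such a surface $S$, I would then apply the Jaco--Shalen--Johannson decomposition of $M \cut S$ into $I$--bundle pieces, solid-torus (Seifert) pieces, and the guts $\guts(M,S)$, exactly as in the definition preceding \refthm{ASTGuts}.

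The heart of the argument is the combinatorial computation of $\guts(M,S)$, due to Lackenby \cite{lackenby:alt-volume}. The key observation is that, in a checkerboard coloring, all bigons within a single twist region share one color, since at each crossing the two bigon regions are opposite and hence monochromatic. Consequently, for a fixed checkerboard surface the bigon stacks of matching color assemble into product pieces and are swept into the characteristic $I$--bundle rather than the guts; the essential annuli along which one cuts are exactly these bigon-induced product annuli. After excising the $I$--bundle and the solid-torus pieces, the guts is governed by the \emph{reduced} diagram in which each twist region is collapsed to a single crossing, and Lackenby's count shows that the two checkerboard surfaces together carry guts Euler characteristic at least $t(D)-2$ in absolute value. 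Choosing the surface $S$ with the larger contribution gives $\abs{\chi(\guts(M,S))} \geq \tfrac{1}{2}(t(D)-2)$, whence \refthm{ASTGuts} yields
\[
\vol(S^3 \setminus K) \;\geq\; \voct\,\abs{\chi(\guts(M,S))} \;\geq\; \frac{\voct}{2}\,(t(D)-2),
\]
as claimed. Here the improvement of the multiplicative constant over Lackenby's original bound is entirely attributable to \refthm{ASTGuts}: the combinatorial input $\abs{\chi(\guts)} \geq \tfrac{1}{2}(t(D)-2)$ is unchanged, but Miyamoto's estimate now contributes a full $\voct$ per unit of Euler characteristic.

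I expect the main obstacle to be the guts computation itself, rather than essentiality or the invocation of \refthm{ASTGuts}, which functions as a black box once the Euler characteristic bound is in hand. The delicate point is verifying that \emph{every} essential annulus in $M \cut S$ is isotopic to one arising from a bigon stack, so that nothing unexpected is absorbed into the characteristic $I$--bundle and the guts is genuinely no smaller than claimed. This requires a careful normal-surface analysis inside Menasco's ideal polyhedra, tracking how annuli meet the crossing balls and using primeness and the alternating condition to rule out stray product regions. It is this identification of the guts with the reduced diagram where the real care is needed.
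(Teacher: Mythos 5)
Your proposal is correct and follows exactly the paper's route: the survey obtains this theorem precisely by combining Lackenby's computation of the guts of the checkerboard surfaces \cite{lackenby:alt-volume} with \refthm{LATUpperBound} for the upper bound and \refthm{ASTGuts} for the lower bound, just as you do, including the combined count $\abs{\chi(\guts(M,B))}+\abs{\chi(\guts(M,W))} \geq t(D)-2$ for $M=S^3\setminus K$ and the max/average step yielding $\frac{\voct}{2}(t(D)-2)$. One small slip in your sketch, which does not affect the final count: for a checkerboard surface $S$, the bigon stacks absorbed into the characteristic $I$--bundle of $M \cut S$ are those of the \emph{opposite} color --- same-color bigons lie in $S$ itself, and their twist regions contribute to the reduced checkerboard graph like single crossings.
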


Thus, for alternating knots, the combinatorics of a diagram determines $\vol(S^3 \setminus K)$ up to a factor less than $6$. Compare \refeqn{2BridgeVolume} in the 2--bridge case.

The authors of this survey have extended the method to the larger family of \emph{semi-adequate} links, and the even larger family of \emph{homogeneously adequate} links. Refer to \cite{fkp:guts} and \cite{fkp:PolandSurvey} for definitions of these families and precise theorem statements. The method gives particularly straightforward estimates in the same vein as \refthm{LackenbyVolAlt} for positive braids \cite{fkp:guts, Giambrone} and for Montesinos links \cite{fkp:guts, FinlinsonPurcell}.

\begin{question}\label{Ques:GutsComputable}
Does every knot $K \subset S^3$ admit an essential  spanning surface $S$ such that the Euler characteristic $\chi(\guts(S^3 \setminus K, \, S))$ can be computed directly from diagrammatic data?
\end{question}

The answer to \refques{GutsComputable} is ``yes'' whenever $K$ admits a \emph{homogeneously adequate} diagram in the terminology of \cite{fkp:guts}. However, it is not known whether $K$ always admits such a diagram. This is closely related to \cite[Question 10.10]{fkp:guts}.

\subsubsection{Dehn filling bounds}
A powerful method for proving lower bounds on the volume of $N = S^3 \setminus K$ involves two steps: first, prove a lower bound on $\vol(M)$ for some surgery parent $M$ of $N$, using one of the above methods; and second, control the change in volume as we Dehn fill $M$ to recover $N$.

The following theorem, proved in \cite{fkp:filling}, provides an estimate that has proved useful for lower bounds on the volume of knot complements.

\begin{theorem}\label{Thm:VolChange} 
Let  $M$ be a cusped hyperbolic $3$--manifold, containing embedded horocusps $C_1, \ldots, C_k$ (plus possibly others). On each torus $T_i = \bdy C_i$, choose a slope $s_i$, such that the shortest length of any of the $s_i$ is $\lmin > 2\pi$. 
  Then the manifold $M(s_1, \dots, s_k)$
obtained by Dehn filling along $s_1, \dots, s_k$ is hyperbolic, and its volume satisfies
\[
 \vol(M(s_1, \dots, s_k)) \geq 
  \left(1-\left(\frac{2\pi}{\lmin}\right)^2\right)^{3/2} \vol(M).
  \]
\end{theorem}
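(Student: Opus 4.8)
The plan is to prove the volume estimate in \refthm{VolChange} by combining the deformation theory of Hodgson and Kerckhoff with an explicit integration of the volume derivative along a geometric path from the complete structure on $M$ to the filled structure on $M(s_1,\dots,s_k)$. The starting observation is that the hypothesis $\lmin > 2\pi$ is precisely the regime in which Hodgson--Kerckhoff's universal rigidity and deformation results apply (cf.\ \cite{HK:univ}): there is a smooth family of cone metrics interpolating between $M$ (cone angle $0$ along the filled slopes) and $M(s_1,\dots,s_k)$ (cone angle $2\pi$, i.e.\ smooth). Along this family the volume is monotonically decreasing as cone angles increase (consistent with \refthm{VolumeDecreases}), so to get a \emph{lower} bound on the filled volume we must control how much volume is lost. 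The key is a pointwise estimate on the rate of volume change in terms of the \emph{normalized length} of the filling slopes.

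First I would recall the Schl\"afli-type formula for the derivative of volume under a cone deformation, which expresses $\tfrac{d}{dt}\vol$ in terms of the cone angles and the lengths of the singular geodesics, and then invoke the Hodgson--Kerckhoff bounds relating these lengths to the normalized length $L = \ell/\sqrt{\area}$ of the filling curve on the cusp torus. The crucial input is that when $\lmin > 2\pi$, the cone deformation exists for all cone angles in $[0,2\pi]$ and the geometry stays uniformly controlled; this is what lets one integrate the volume derivative. Carrying out the integration yields a loss factor governed by $(2\pi/\lmin)^2$, and after bounding the integral one arrives at the stated factor $\bigl(1-(2\pi/\lmin)^2\bigr)^{3/2}$.

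The main technical steps, in order, are: (i) set up the one-parameter family of cone-manifold structures via Hodgson--Kerckhoff, verifying that the hypothesis $\lmin > 2\pi$ guarantees the family exists and is nonsingular throughout; (ii) apply the Schl\"afli formula to write the total volume change as an integral over the deformation parameter; (iii) use the Hodgson--Kerckhoff estimates to bound the geodesic lengths (equivalently the derivative of volume) from below in terms of $\lmin$; and (iv) integrate and simplify to extract the factor $\bigl(1-(2\pi/\lmin)^2\bigr)^{3/2}$. The hyperbolicity of $M(s_1,\dots,s_k)$ follows along the way, since the endpoint of the deformation is a smooth (cone angle $2\pi$) complete hyperbolic structure.

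The hardest part will be step (iii): obtaining a \emph{uniform} and sharp enough lower bound on the volume derivative across the entire deformation, since the naive Schl\"afli estimate controls volume change by the lengths of the singular loci, and these lengths themselves vary as the cone angle increases. The delicate point is that one must bound these lengths throughout the deformation, not just at the endpoints, and translate the normalized-length hypothesis $\lmin > 2\pi$ into a usable bound on the core geodesic length at every stage. This is exactly where the fine deformation estimates of Hodgson--Kerckhoff enter, and it is the step that forces the threshold $2\pi$ rather than a smaller constant. Once that uniform bound is in hand, the remaining integration is a routine computation producing the exponent $3/2$.
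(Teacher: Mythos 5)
Your proposal takes a genuinely different route from the paper---cone deformations plus the Schl\"afli formula instead of the paper's metric-construction argument---but as written it has a real gap at step (i). The hypothesis of \refthm{VolChange} bounds the \emph{actual} length of the filling slopes on embedded horocusps, whereas the Hodgson--Kerckhoff universal deformation results \cite{HK:univ} are governed by the \emph{normalized} length $\hat L = \ell/\sqrt{\area(T_i)}$ and require $\hat L$ to exceed a threshold around $7.5$ for the family of cone metrics to exist, nonsingularly, for all cone angles from $0$ to $2\pi$. These conditions are not comparable: a long, thin cusp torus can carry a slope of length barely over $2\pi$ whose normalized length is arbitrarily small, so $\lmin > 2\pi$ does not place you in the Hodgson--Kerckhoff regime at all, and your claim that this hypothesis is ``precisely the regime'' where their theory applies is false. (You in fact mislabel $\lmin > 2\pi$ as a ``normalized-length hypothesis'' in step (iii).) A secondary problem: even where the cone-deformation machinery does apply, integrating the Schl\"afli derivative with their length estimates produces an \emph{additive} bound on the volume lost under filling, decaying like $\hat L^{-2}$; it is not known to produce, and there is no visible mechanism for it to produce, the multiplicative factor $\bigl(1-(2\pi/\lmin)^2\bigr)^{3/2}$. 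The survey itself signals this distinction, citing the Hodgson--Kerckhoff cone-deformation estimate as an \emph{earlier result in the same vein}, separate from the proof of this theorem.

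The proof in \cite{fkp:filling} instead exploits the fact that $2\pi$ is the Gromov--Thurston threshold: following the proof of the $2\pi$-theorem \cite{BleilerHodgson:2pi}, one constructs explicit negatively curved metrics on the solid tori glued in during filling, matching the hyperbolic metric along the horospherical boundary. This yields in one step a complete negatively curved metric on $M(s_1,\dots,s_k)$---so hyperbolicity follows from geometrization, not from the endpoint of a deformation---whose volume is bounded below in terms of $\vol(M)$ and whose curvatures are controlled in terms of $2\pi/\lmin$. The comparison between this negatively curved metric and the true hyperbolic metric is then made via the volume-entropy rigidity of Besson--Courtois--Gallot \cite{BCG}, extended to the finite-volume cusped setting in \cite{BCS}; it is exactly this entropy comparison in dimension $3$ that produces both the factor $1-(2\pi/\lmin)^2$ and the exponent $3/2$. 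So the constant you are asked to prove is a fingerprint of the $2\pi$-theorem-plus-BCG strategy, and recovering it would require replacing your steps (i)--(iv) with that construction.
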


Earlier results in the same vein include an asymptotic estimate by Neumann and Zagier \cite{NeumannZagier}, as well as a cone-deformation estimate by Hodgson and Kerckhoff \cite{HK:univ}.

The idea of the proof of \refthm{VolChange} is as follows. Building on the proof of the Gromov--Thurston $2\pi$-Theorem, construct explicit negatively curved metrics on the solid tori added during Dehn filling. This yields a negatively curved metric on $M(s_1, \dots, s_k)$ whose volume is bounded below in terms of $\vol(M)$. Then, results of  
Besson, Courtois, and Gallot \cite{BCG, BCS} can be used to compare the volume of the negatively curved metric on $M(s_1, \dots, s_k)$ with the true hyperbolic volume.

Theorem \ref{Thm:VolChange} 
leads to diagrammatic volume bounds for several classes of hyperbolic links. 
For example, the following theorem from \cite{fkp:filling} gives a double-sided volume bound similar to \refthm{LackenbyVolAlt}.

\begin{theorem} \label{htwisted}
Let $K \subset S^3$ be a link with a prime, twist--reduced diagram
  $D(K)$. 
 
  Assume that $D(K)$ has $t(D) \geq 2$ twist regions, and
  that each region contains at least $7$ crossings. Then $K$ is a
  hyperbolic link satisfying
  $$0.70735 \; (t(D) - 1) \; < \; \vol(S^3 \setminus K) \; < \; 10\,
  \vtet \, (t(D) - 1).$$
  \end{theorem}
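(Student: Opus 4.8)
The upper bound is essentially immediate: it is the conclusion of \refthm{LATUpperBound} applied to the diagram $D(K)$ itself, and the strict inequality comes from \refthm{VolumeDecreases}, since $S^3\setminus K$ will be realized below as a nontrivial Dehn filling of the fully augmented link used to prove \refthm{LATUpperBound}. So the real content is the lower bound, and my plan is to obtain it (together with hyperbolicity of $K$) in one stroke by realizing $S^3\setminus K$ as a Dehn filling of a fully augmented link and invoking \refthm{VolChange}.

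First I would form the fully augmented link $L$ associated to $D(K)$, adding one crossing circle per twist region as in \refsec{CirclePack} and \reffig{Augmented}. Since $D$ is prime and twist--reduced, $L$ is hyperbolic and carries the explicit right--angled polyhedral structure coming from its circle packing. By construction, $S^3\setminus K$ is recovered from $S^3\setminus L$ by Dehn filling each crossing--circle cusp $C_i$ along the slope $s_i$ that reinserts the $c_i\geq 7$ crossings of the $i$--th twist region (a $1/n_i$ slope, up to a parity correction for an odd residual crossing); the original link cusps are left unfilled, which is permitted by the ``plus possibly others'' clause of \refthm{VolChange}.

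Two geometric inputs then drive the estimate. The first is the volume bound for fully augmented links, $\vol(S^3\setminus L)\geq 2\voct\,(t(D)-1)$, which follows from the decomposition of $S^3\setminus L$ into two right--angled ideal polyhedra together with Miyamoto's lower bound \cite{miyamoto} (this is sharp already at $t(D)=2$); see \cite{FuterPurcell, purcell:augmented}. The second is a lower bound on the filling slopes: using the explicit cusp geometry, the crossing--circle cusp is a Euclidean rectangle in which the full--twist direction is long, giving $\ell(s_i)\geq \sqrt{c_i^2+1}$. Because every twist region has $c_i\geq 7$ crossings, this yields $\lmin=\min_i \ell(s_i)\geq \sqrt{50}\approx 7.07>2\pi$. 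Since $\lmin>2\pi$, \refthm{VolChange} applies, simultaneously certifying that $S^3\setminus K$ is hyperbolic and giving
\[
\vol(S^3\setminus K)\ \geq\ \left(1-\left(\tfrac{2\pi}{\lmin}\right)^2\right)^{3/2}\vol(S^3\setminus L)\ \geq\ \left(1-\tfrac{(2\pi)^2}{50}\right)^{3/2}\,2\voct\,(t(D)-1),
\]
and the numerical constant $\left(1-(2\pi)^2/50\right)^{3/2}\cdot 2\voct=0.70735\ldots$ is exactly what appears in the statement.

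I expect the main obstacle to be the slope--length estimate $\ell(s_i)\geq\sqrt{c_i^2+1}$. This is where the explicit hyperbolic geometry of $L$ is genuinely needed: one must pin down the Euclidean shape of each crossing--circle cusp from the circle packing, choose embedded horoball neighborhoods $C_1,\dots,C_k$ that simultaneously realize the rectangular normalization (so that $\lmin$ is controlled across \emph{all} crossing circles at once), and handle the parity bookkeeping for twist regions with an odd residual crossing so that the count $c_i\geq 7$ translates cleanly into $\lmin\geq\sqrt{50}$. The volume bound for $L$ and the application of \refthm{VolChange} are comparatively routine once this cusp analysis is in place; the details are carried out in \cite{fkp:filling}.
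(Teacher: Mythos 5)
Your proposal is correct and takes essentially the same route as the paper: the survey's proof of Theorem~\ref{htwisted} likewise realizes $S^3\setminus K$ as a Dehn filling of the fully augmented link of $D(K)$, bounds $\vol(S^3\setminus L)$ below via Miyamoto's theorem in terms of $t(D)$, uses the hypothesis of at least $7$ crossings per twist region to guarantee filling slopes longer than $2\pi$, and applies \refthm{VolChange}, with the upper bound coming from \refthm{LATUpperBound}. Your slope estimate $\ell(s_i)\geq\sqrt{c_i^2+1}\geq\sqrt{50}$ and the resulting constant $\left(1-(2\pi)^2/50\right)^{3/2}\cdot 2\voct=0.70735\ldots$ reproduce exactly the computation carried out in \cite{fkp:filling}.
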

  
  The strategy of the proof of Theorem \ref{htwisted} is to view $S^3 \setminus K$
  as a Dehn filling on the complement of an augmented link obtained from the highly twisted diagram
  $D(K)$.  The   volume of augmented links can be bounded below in terms of $t(D)$ using Miyamoto's theorem \cite{miyamoto}.  The hypothesis that each region contains at least $7$ crossings ensures that the filling slopes are strictly longer than $2\pi$, hence \refthm{VolChange} gives the result.
  
Similar arguments using \refthm{VolChange} have been applied to links obtained by adding alternating tangles \cite{fkp:symmetric}, closed 3--braids \cite{fkp:farey} and weaving links \cite{CKP:Weaving}. 

\subsubsection{Knots with symmetry groups}

We close this section with a result about the volumes of symmetric knots. Suppose $K \subset S^3$ is a hyperbolic knot, and $G$ is a group of symmetries of $K$. That is, $G$ acts on $S^3$ by orientation--preserving homeomorphism that send $K$ to itself. It is a well-known consequence of Mostow rigidity that $G$ is finite and acts on $M = S^3 \setminus K$ by isometries \cite[Corollary 5.7.4]{thurston:notes}. Furthermore, $G$ is cyclic or dihedral \cite{HTW}. 

Define $n=n(G)$ to be the smallest order of a subgroup $\mathrm{Stab}_G(x)$ stabilizing a point $x \in S^3 \setminus K$, or else $n = |G|$ if the group acts freely. While this definition depends on how $G$ acts, it is always the case that $n(G)$ is at least as large as the smallest prime factor of $|G|$.

The following result follows by combining several statements in the literature. Since it has not previously been recorded, we include a proof.

\begin{theorem}\label{Thm:SymmetricKnotVolume}
  Let $K \subset S^3$ be a hyperbolic knot. Let $G$ be a group of orientation--preserving symmetries of $S^3$ that send $K$ to itself. Define $n=n(G)$ as above.
  Then
  \begin{equation*}
    \vol(S^3 \setminus K) \: \geq \: |G| \cdot x_n,
  \end{equation*}
  where $x_n = 2.848$ if $n>10$ and $n\neq 13, 18, 19$ and $x_n$ takes the following values otherwise.

\begin{center}
  \begin{tabular}{| l c | c |l l |}
    \hline 
    $\voct / 12 = 0.30532 \ldots$ & $n =2$ & \hspace{.2in} 
      & $2.16958$ & $n =7,8$ \\
    $\vtet / 2 = 0.50747 \ldots$ & $n =3$ & & 
	$2.47542$ & $n =9$ \\
    $0.69524$ & $n =4$ & &
        $2.76740$ & $n = 10$ \\
    $1.45034$ & $n =5$ & & 
	$\vol({\tt m011}) = 2.7818\dots$ & $n = 13$ \\
    $2.00606$ & $n =6$  & & 
	$\vol({\tt m016}) = 2.8281\dots$ & $n = 18,19$ \\
    \hline
  \end{tabular}
\end{center}
  \end{theorem}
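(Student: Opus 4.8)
The plan is to pass to the quotient orbifold and turn the statement into a lower bound on its volume. Since $K$ is hyperbolic, Mostow--Prasad rigidity realizes $G$ as a finite group of orientation--preserving isometries of $M = S^3 \setminus K$, as noted just before the theorem, so the quotient $\mathcal{O} = M/G$ is a cusped, orientable hyperbolic $3$--orbifold. Volume is multiplicative under the orbifold covering $M \to \mathcal{O}$, so
\[
\vol(S^3 \setminus K) = |G| \cdot \vol(\mathcal{O}).
\]
Thus it suffices to prove $\vol(\mathcal{O}) \geq x_n$, and the whole content of the theorem becomes a lower bound on $\vol(\mathcal{O})$ governed by the invariant $n = n(G)$. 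I would then split into the non-free and free cases, since these control $\mathcal{O}$ in qualitatively different ways.

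\textbf{Non-free case.} Suppose some point of $M$ has nontrivial stabilizer. Because the action is orientation--preserving, an element fixing a point is elliptic and fixes a geodesic axis, whose image is a component of the singular locus of $\mathcal{O}$ with cone order equal to the order of the stabilizer. By definition $n$ is the smallest order of a nontrivial stabilizer, so $\pi_1^{\mathrm{orb}}(\mathcal{O})$ contains an element of order exactly $n$; that is, $\mathcal{O}$ is a cusped orientable hyperbolic $3$--orbifold carrying torsion of order $n$. Here I would import the known lower bounds on the volume of such orbifolds as a function of the torsion order: Margulis--tube and horoball--packing estimates around the singular axis (in the tradition of Gehring--Martin, Marshall--Martin, and Adams) force the covolume to grow with the torsion order, and the resulting minima are precisely the tabulated values $x_n$ for $n \leq 10$, while a uniform estimate yields $\vol(\mathcal{O}) \geq 2.848$ once $n > 10$.

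\textbf{Free case.} If $G$ acts freely then $n = |G|$ and $\mathcal{O}$ is an honest cusped orientable hyperbolic $3$--manifold, covered by the knot complement $M$ with deck group $G$ (cyclic or dihedral, by \cite{HTW}) of order $n$. For this case I would invoke the census of smallest--volume cusped hyperbolic $3$--manifolds of Gabai--Meyerhoff--Milley \cite{gmm:smallest-cusped}: every cusped orientable hyperbolic $3$--manifold of volume below $2.848$ appears on an explicit finite list, and one checks directly which members admit a free cyclic or dihedral symmetry of order $n > 10$ whose total space is a knot complement in $S^3$. Only $\mathtt{m011}$ (realizing $n = 13$) and $\mathtt{m016}$ (realizing $n = 18, 19$) survive this check, accounting for the three exceptional table entries and the census volumes that appear there; all remaining free actions of order $n > 10$ give $\vol(\mathcal{O}) \geq 2.848$. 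Combining the two cases, both bounds are $\geq x_n$, and the listed $x_n$ is the minimum actually realized.

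\textbf{Main obstacle.} The routine part is the volume--multiplicativity reduction and the case split; the real weight of the argument lies in importing the sharp volume estimates. Establishing that $x_n$ is a valid lower bound for cusped orientable hyperbolic $3$--orbifolds with $n$--torsion rests on the delicate extremal--lattice arguments cited above, together with the computer--assisted verifications that pin down the exceptional minima and confirm that no orbifold of volume below $2.848$ carries torsion of order $n > 10$ outside $\{13, 18, 19\}$. The care needed is in checking at each stage that the structural hypotheses forced on $\mathcal{O}$ --- a single cusp since $K$ is a knot, an orientation--preserving action, and $G$ cyclic or dihedral --- are compatible with the extremal orbifolds and manifolds realizing the values $x_n$, so that the minima in the general orbifold classification are genuinely attainable in the present setting.
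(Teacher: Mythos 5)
Your proposal matches the paper's proof essentially step for step: the same reduction to $\vol(\mathcal{O}) \geq x_n$ via volume multiplicativity, the same split into a non-free case handled by quoting known minimal-volume bounds for cusped orientable hyperbolic $3$--orbifolds with torsion of order at least $n$ (the paper cites Adams for $n=2$, Adams--Meyerhoff for $n=3$, and Atkinson--Futer for $n \geq 4$), and the same free case via the Gabai--Meyerhoff--Milley census with $\mathtt{m011}$ and $\mathtt{m016}$ as the surviving exceptions. The one substantive detail you leave as a black box is how the census check is made rigorous: the paper first uses the solution to the Smith conjecture plus Milnor's theorem to force $G$ to be cyclic (eliminating your dihedral possibility a priori), so that $S^3/G$ is a lens space filling of the census manifold $N$, and then disposes of candidates by enumerating cyclic covers in SnapPy and using homology obstructions; note also that this verification is needed for small $n$ as well, not only $n>10$ --- for instance the $9$--fold cyclic cover of $\mathtt{m011}$ is a knot complement, and one must observe that $x_9 < \vol(\mathtt{m011})$ for the stated table to stand.
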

	
	\begin{proof}

First, suppose that $G$ acts on $M = S^3 \setminus K$ with fixed points. Then the quotient $\mathcal{O} = M / G$ is a non-compact, orientable hyperbolic $3$--orbifold whose torsion orders are bounded below by $n$. We need to check that $\vol(\mathcal{O}) \geq x_n$. If $n = 2$, this result is due to Adams \cite[Corollary 8.2]{adams:limit-volume-orbifolds}. If $n = 3$, the result is essentially due to Adams and Meyerhoff; see \cite[Lemma 2.2]{atkinson-futer:link-orbifolds} and \cite[Lemma 2.3]{atkinson-futer:high-torsion}. If $n \geq 4$, the result is due to Atkinson and Futer  \cite[Theorem 3.8]{atkinson-futer:high-torsion}. In all cases, it follows that $\vol(M) \geq |G| \cdot x_n$.	
	
Next, suppose that $G$ acts freely on $M = S^3 \setminus K$. Then the quotient $N = M / G$ is a non-compact, orientable hyperbolic $3$--manifold. If $\vol(N) \geq 2.848$, then the theorem holds automatically because $x_n \leq 2.848$ for all $n$. If $\vol(N) < 2.848$, then Gabai, Meyerhoff, and Milley showed that $N$ is one of  10 enumerated $3$--manifolds \cite[Theorem 1.2]{gmm:smallest-cusped}. In SnapPy notation, these are $\tt m003$,
$\tt m004$, $\tt m006$, $\tt m007$, $\tt m009$, $\tt m010$, $\tt m011$, $\tt m015$, $\tt m016$, and $\tt m017$.
    We restrict attention to these manifolds.

Since $G$ acts freely on $M$, the solution to the Smith conjecture implies that $G$ also acts freely on $S^3$. 
By a theorem of Milnor \cite[page 624]{Milnor:free-action}, $G$ contains at most one element of order $2$, which implies that it must be cyclic. Thus $P = S^3/G$ is a lens space obtained by a Dehn filling on $N$. 
An enumeration of the lens space fillings of the 10 possible manifolds $N$ appears in the table on \cite[page 243]{fkp:symmetric}. This enumeration can be used to show that all possibilities satisfy the statement of the theorem.

Suppose that a lens space $L(p,q)$ is a Dehn filling of $N$. If $N$ actually occurs as a quotient of $M = S^3 \setminus K$, then $M$ must be a cyclic $p$--fold cover of $N$. We may rigorously enumerate all cyclic $p$--fold covers using  SnapPy \cite{SnapPy}. In almost all cases, homological reasons show that these covers are not knot complements. For instance, $N = \mathtt{m003}$ has two lens space fillings: $L(5,1)$ and $L(10,3)$. This manifold has six $5$--fold and six $10$--fold cyclic covers, none of which has first homology $\ZZ$. Thus  $\mathtt{m003}$ is not a quotient of a knot complement. The same technique applies to 8 of the 10 manifolds $N$. 

The two remaining exceptions determine several values of $x_n$.
The manifold $ {\tt m011}$ has $9$--fold and $13$--fold cyclic covers that are knot complements in $S^3$. The value of $x_9$ is already smaller than $\vol({\tt m011})$, but the value of $x_{13}$ is determined by this example. Similarly, the manifold ${\tt m016}$, which is the $(-2,3,7)$ pretzel knot complement, has $18$--fold and $19$--fold cyclic covers that are knot complements, determining the values of $x_{18}$ and $x_{19}$.
\end{proof}

\section{Cusp shapes and cusp areas}\label{Sec:Cusps}

Several results discussed above, such as  \refthm{6Theorem} and  \refthm{VolChange}, require the slopes used in Dehn filling along knot or link complements to be long. To bound lengths of slopes, we consider an additional invariant of hyperbolic knots and links, namely their cusp shapes and cusp areas. 

\begin{definition}\label{Def:CuspArea}
Let $C_1, \dots, C_n$ be a fixed choice of maximal cusps for a link complement $M$, as in \refdef{MaximalCusp}. The \emph{cusp area} of a component $C_i$, denoted by $\area(\partial C_i)$ is the Euclidean area of $\partial C_i$. The \emph{cusp volume}, denoted by $\vol(C_i)$, is the volume of $C_i$. Note that $\area(\partial C_i) = 2 \vol(C_i)$. When $M$ has multiple cusps, the cusp area and cusp volume depend on the choice of maximal cusp.
\end{definition}

This section surveys some methods for estimating the area of a maximal cusp and the length of slopes on it, and poses some open questions.

\subsection{Direct computation}

Similar to the techniques in \refsec{Hyperbolic}, if we can  explicitly  determine a geometric triangulation of a hyperbolic $3$--manifold, then we can determine its cusp shape and cusp area. This is implemented in SnapPy \cite{SnapPy}.

For fully augmented links, whose geometry is completely determined by a circle packing, the cusp shape is also determined by the circle packing. The cusp area can be computed by finding an explicit collection of disjoint horoballs in the fully augmented link, as in \cite{FuterPurcell}.

\medskip

Under very strong hypotheses, it is possible to apply the cone deformation techniques of Hodgson and Kerckhoff \cite{HK:univ} to bound the change in cusp shape under Dehn filling. 
Purcell carried this out in \cite{Purcell:Cusps}, starting from a fully augmented link. However, the results only apply to knots with at least two twist regions and at least 116 crossings per twist region.

To obtain more general bounds for larger classes of knots and links, additional tools are needed. The main tools are pleated surfaces and packing techniques. 

\subsection{Upper bounds and pleated surfaces}

If $M$ is a hyperbolic link complement, then for any choice of maximal cusp, there is a collection of slopes whose Dehn fillings gives $S^3$. These are the \emph{meridians} of $M$. Because $S^3$ is not hyperbolic, the $6$--Theorem implies that in any choice of maximal cusp for $M$,  one or more of these slopes must have length at most $6$. 
Indeed, the $6$--Theorem is proved by considering punctured surfaces  immersed in $M$ and using area arguments to bound the length of a slope. 

\begin{definition}\label{Def:Pleated}
Let $M$ be a hyperbolic $3$--manifold with cusps a collection of cusps $C$, and let $S$ be a hyperbolic surface. A \emph{pleated surface} is a piecewise geodesic, proper  immersion $f\from S \to M$. Properness means that any cusps of $S$ are mapped into cusps of $M$. 
The surface $S$ is cut into ideal triangles, each of which is mapped isometrically into $M$. In $M$, there may be bending along the sides of the triangles. See \reffig{Pleating}.
\end{definition}

\begin{figure}
\includegraphics{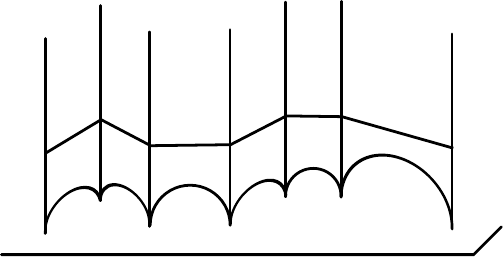}
\caption{The lift of a pleated surface to the universal cover $\HH^3$ of $M$. The piecewise linear zig-zag lies in a single horosphere.}
\label{Fig:Pleating}
\end{figure}

An essential surface $S$ in a hyperbolic $3$--manifold $M$ can always be homotoped into a pleated form.
The idea is to start with an ideal triangulation of $S$, then homotope the images of the edges in $M$ to be ideal geodesics in $M$. Similarly, homotope the ideal triangles to be totally geodesic, with sides the geodesic edges in $M$. This gives $S$ a pleating. See 
 \cite[Theorem 5.3.6]{CanaryEpsteinGreen:Notes} or  \cite[Lemma 2.2]{Lackenby:Word} for proofs. 

The main result on slope lengths and pleated surfaces is the following, which is a special case of \cite[Theorem~5.1]{Agol:Bounds} and  \cite[Lemma~3.3]{Lackenby:Word}.
The result  is used in the proof of the $6$--Theorem.

\begin{theorem}\label{Thm:Pleated}
Let $M=S^3\setminus K$ be a hyperbolic knot complement with a maximal cusp $C$. Suppose that $f\from S \to M$ is a pleated surface, and let $\ell_C(S)$ denote the total length of the intersection curves in $f(S) \cap \partial C$. Then 
\[ \ell_C(S)\ \leq \  6 | \chi(S)|.\]
\end{theorem}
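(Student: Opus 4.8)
The plan is to exploit that a pleated surface is assembled from ideal triangles on which $f$ is a local isometry, so that the global length bound reduces to a single sharp estimate on one ideal triangle. First I would fix the ideal triangulation of the hyperbolic surface $S$ coming from its pleated structure (\refdef{Pleated}). Since $S$ has finite area and all of its ends are cusps, an ideal triangulation of $S$ has exactly $-2\chi(S) = 2\abs{\chi(S)}$ triangles. Because $f$ maps each ideal triangle isometrically into $M$ and carries the cusps of $S$ into the single cusp $C$ of the knot complement, the set $f(S) \cap \partial C$ is, triangle by triangle, a union of horocyclic arcs cut off near the three ideal vertices. As $f$ is a local isometry on each triangle, the total length satisfies $\ell_C(S) \le \sum_{\Delta} \alpha(\Delta)$, where $\alpha(\Delta)$ denotes the total length of the three horocyclic arcs in a single triangle $\Delta$.

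The heart of the argument is the following local estimate: \emph{for an ideal triangle $\Delta$ whose three ideal vertices carry horoballs with pairwise disjoint interiors, one has $\alpha(\Delta) \le 3$.} Granting this, the triangle count immediately yields $\ell_C(S) \le 3 \cdot 2\abs{\chi(S)} = 6\abs{\chi(S)}$, as claimed. To prove the estimate I would first note that the three relevant horoballs are lifts of the embedded maximal cusp $C$, so their interiors are indeed disjoint. Since an ideal triangle is planar, I can work in $\HH^2$: placing one vertex at $\infty$ in the upper half-plane and the other two at $0$ and $1$, with the horoball at $\infty$ at Euclidean height $h$ and the horoballs at $0$ and $1$ of Euclidean diameters $d_0, d_1$, a direct computation gives the three arc lengths — the arc at $\infty$ has length $1/h$ (in the non-degenerate range), and the arcs at $0$ and $1$ have lengths $d_0$ and $d_1$. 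Disjointness reads $d_0 d_1 \le 1$ and $d_0, d_1 \le h$. Maximizing the sum of the three arc lengths subject to these constraints gives maximum value $3$, attained exactly when the three horoballs are pairwise tangent (each arc then having length $1$).

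The main obstacle is this local estimate, and within it two points deserve care. First, one must verify that $f^{-1}(C)$ consists only of horoball corner-neighborhoods of the cusps of $S$, so that $f(S) \cap \partial C$ is accounted for entirely by the per-triangle arcs. A large horoball can poke through the opposite edge of a triangle, but one checks that this only \emph{decreases} the contribution: the intersection $\partial B \cap \Delta$ then breaks into two shorter segments, so the bound $\alpha(\Delta) \le 3$ persists and no interior component is created. Second, the optimization must be run over the correct feasible region, tracking this degenerate ``poking'' regime. The clean way to organize it is to write each arc length as a ratio of Penner $\lambda$-lengths of the three edges, reducing the problem to maximizing $\frac{\lambda_1^2 + \lambda_2^2 + \lambda_3^2}{\lambda_1 \lambda_2 \lambda_3}$ with each $\lambda_i \ge 1$ (disjointness) and each arc at most $2$ (non-poking); the maximum is $3$.

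A coarser but quicker route, acceptable if one settles for the weaker constant $2\pi$, is to observe that each horocyclic corner has hyperbolic area equal to its arc length and that the three disjoint corners lie inside $\Delta$, whose area is $\pi$; this gives $\alpha(\Delta) \le \pi$ and hence $\ell_C(S) \le 2\pi \abs{\chi(S)}$. The sharp constant $6$, however, genuinely requires the optimization above rather than this area estimate, and it is sharp: when every triangle's three horoballs are pairwise tangent, the bound $6\abs{\chi(S)}$ is attained.
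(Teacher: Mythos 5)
Your route is genuinely different in organization from the paper's, though the two are close cousins. The paper works globally on the surface: it chooses disjoint horocusp neighborhoods $H_i \subset S$ with $f(H_i)\subset C$, uses the identity $\ell(\bdy H_i)=\area(H_i)$, and then quotes B\"or\"oczky's packing theorem (horocusp density at most $3/\pi$ in a hyperbolic surface) together with Gauss--Bonnet to get $\ell \leq \frac{3}{\pi}\cdot 2\pi\abs{\chi(S)} = 6\abs{\chi(S)}$. Your per-triangle optimization --- maximizing $1/h+d_0+d_1$ subject to $d_0d_1\leq 1$ and $d_0,d_1\leq h$, giving the sharp value $3$ per ideal triangle, times $2\abs{\chi(S)}$ triangles --- is exactly the local content of B\"or\"oczky's density bound, unpacked rather than quoted; your ``coarser route'' (corner arc length equals corner area, three disjoint corners inside a triangle of area $\pi$) is precisely the paper's length-equals-area identity with the trivial density $1$ in place of $3/\pi$. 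Your constraint analysis is correct, the count of $2\abs{\chi(S)}$ triangles for an ideal triangulation with all vertices at punctures is correct, and the bound does persist in the poking regime (the arc at a vertex with parameter $t=1/d_0<1/2$ has length $(1-\sqrt{1-4t^2}\,)/t \leq 2$, tending to $0$ as $d_0\to\infty$). One small slip: the maximum $3$ is not attained only at the pairwise-tangent configuration; for instance $d_0=2$, $d_1=1/2$, $h=2$ also realizes it. That is harmless.

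The genuine gap is your claim that $f^{-1}(C)$ consists only of corner neighborhoods of the cusps of $S$, so that $f(S)\cap\bdy C$ is exhausted by the per-triangle corner arcs. The cusp $C$ lifts to the full equivariant family $\rho^{-1}(C)$ of horoballs in $\HH^3$, and a lifted totally geodesic triangle can be met by horoballs centered at parabolic points \emph{other than} its three ideal vertices: a horoball whose center is off the boundary circle of the triangle's plane meets that plane in a metric disk, producing intersection components that are circles or edge-crossing arcs, not corner horocycles. Your ``poking'' discussion covers only the triangle's own corner horoballs crossing the opposite edge, so these accidental components are unaccounted for; note also that the length-equals-area trick does not dispose of them, since a metric disk of radius $r$ has boundary length $2\pi\sinh r$, which \emph{exceeds} its area $2\pi(\cosh r-1)$. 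This is exactly the point where the sources (Agol's Theorem 5.1, Lackenby's Lemma 3.3) and the paper's own sketch are formulated more carefully: they bound the total horocycle length $\sum\ell(\bdy H_i)$ of cusp neighborhoods $H_i\subset S$ with $f(H_i)\subset C$ --- i.e., they control the cusp-encircling curves, which is what the slope-length applications require --- rather than literally all of $f(S)\cap\bdy C$. To repair your argument, either restrict $\ell_C(S)$ to the curves coming from cusps of $S$ (matching the sources), or add an argument eliminating or absorbing the accidental components; as written, ``no interior component is created'' is asserted where it is in fact false for $f^{-1}(C)$ in general.
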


The idea of the proof of \refthm{Pleated} is to find disjoint horocusp  neighborhoods $H=\cup H_i$ in $S$ such that $f(H_i)\subset C$,  and such that $\ell(\partial H_i)$ is at least as big as the length of $f(\partial H_i)$ measured on $C$. This allows us to compute as follows:
\[ \ell_C(S)\leq \sum_{i=1}^{s}\ell(\partial H_i) = \sum_{i=1}^{s} \area(H_i) \leq  \frac{6}{2\pi} \area(S)=\frac{6}{2\pi} \cdot 2\pi  | \chi(S)|. \]
Here, the first inequality is by construction. The second equality is a general fact about hyperbolic surfaces, proved by a calculation in $\HH^2$. The third inequality is a packing theorem due to B\"or\"oczky \cite{boroczky}. The final equality is the Gauss--Bonnet theorem.
\subsubsection{Sample applications}

As noted above, the 6--Theorem implies that the length of a meridian is at most $6$.
\refthm{Pleated} has also been used to estimate the lengths of other slopes. For example, a \emph{$\lambda$--curve} is defined to be a curve that intersects the meridian $\mu$ exactly once. The knot-theoretic longitude, which is null-homologous in $S^3 \setminus K$, is one example of a $\lambda$--curve, and need not be the shortest $\lambda$--curve.
There may be one or two shortest $\lambda$--curves. For any $\lambda$--curve $\lambda$, note that $\ell(\mu)\ell(\lambda)$ gives an upper bound on cusp area. 

By applying \refthm{Pleated} to a singular spanning surface in a knot complement, the authors of \cite{AdamsEtAl:CuspSize} obtain the following upper bounds on meridian, $\lambda$--curve, and cusp area.

\begin{theorem}\label{Thm:AdamsCuspBound}
Let $K$ be a hyperbolic knot in $S^3$ with crossing number $c=c(K)$. Let $C$ denote the maximal cusp of $S^3\setminus K$. Then, for the meridian $\mu$ and for the shortest $\lambda$--curve,
\[ \ell(\mu) \leq 6 - \frac{7}{c}, \quad \ell(\lambda) \leq 5c-6, \quad
\mbox{ and } \quad \area(\partial C) \leq {9c} \left(1 - \dfrac{1}{c}\right)^2.
\]
\end{theorem}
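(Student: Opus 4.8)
The plan is to apply \refthm{Pleated} to a \emph{singular spanning surface} for $K$ read off from a minimal--crossing diagram $D = D(K)$, and then to recover each of the three bounds from the way this surface meets $\partial C$. Throughout, the only geometric input is the length estimate $\ell_C(S) \le 6\abs{\chi(S)}$ of \refthm{Pleated}; everything else is diagrammatic bookkeeping.

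First I would build an immersed spanning surface $f\from S \to S^3\setminus K$ with $\partial S$ mapping onto $K$, arranging that its complexity is controlled by the $c$ crossings of $D$: after discarding inessential closed components and compressing, $\abs{\chi(S)}$ should be a linear function of $c$. Homotoping $S$ into pleated form as in \refdef{Pleated} (homotope the edges of an ideal triangulation of $S$ to geodesics and the triangles to totally geodesic pieces) makes \refthm{Pleated} applicable, giving $\ell_C(S) \le 6\abs{\chi(S)}$, where $\ell_C(S)$ is the total length of $f(S)\cap \partial C$.

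Next I would separate the curves of $f(S)\cap \partial C$ into two families and read off the bounds. The image of $\partial S$ runs once longitudinally around $K$, so it contributes a $\lambda$--curve on $\partial C$; as the shortest $\lambda$--curve can only be shorter, this yields $\ell(\lambda)\le \ell_C(S)$ and, after the Euler--characteristic count, the bound $\ell(\lambda)\le 5c-6$. Wherever two strands of $K$ run close together---in particular at each of the $c$ crossings---the surface is forced to cross the cusp in a small meridional loop, so $f(S)\cap \partial C$ contains many parallel copies of $\mu$; writing $N$ for the number of such loops, the estimate $N\,\ell(\mu)\le \ell_C(S)$ together with the count of loops gives $\ell(\mu)\le 6 - 7/c$. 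As a sanity check, this degenerates to the bare estimate $\ell(\mu)\le 6$ coming from \refthm{6Theorem} as $c\to\infty$, since meridian filling produces $S^3$. For the area I would not take the wasteful product $\ell(\mu)\,\ell(\lambda)$, but instead bound the Euclidean height of $\partial C$ transverse to $\mu$ directly, using the meridional family to control the longitudinal translation; this is what produces the sharper constant in $\area(\partial C)\le 9c(1-1/c)^2$.

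The crux, and the step I expect to be hardest, is producing a single singular spanning surface for which $\abs{\chi(S)}$ \emph{and} the number $N$ of meridional intersection curves are simultaneously pinned down in terms of $c$, while ensuring that the surface's self--intersections do not inflate $\ell_C(S)$ and thereby weaken the constants. The precise correction terms---the $-7$, the $-6$, and the factor $9$---are all artifacts of this count, and in particular the area estimate requires extracting an honest bound on the cusp height from the pleated surface rather than from the generic packing inequality underlying \refthm{Pleated}. Handling the low--crossing cases, where the linear count could fail, would also need separate attention.
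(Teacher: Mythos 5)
Your overall route is the same one the survey attributes to \cite{AdamsEtAl:CuspSize}: pleat a singular spanning surface built from a $c$--crossing diagram and feed it into \refthm{Pleated}. The surface that does the job there is concrete: coning the diagram gives a singular disk bounded by $K$ with one double arc per crossing, and after puncturing along these one obtains a singular surface $S$ with $\abs{\chi(S)} = c-1$ whose intersection with $\partial C$ consists of exactly one $\lambda$--curve and $c$ meridians. So your skeleton (one $\lambda$--curve plus $N$ meridional curves, $N$ linear in $c$) is the right one, with $N = c$.

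But the constants do not come out of this surface alone, and here your plan has a genuine gap. \refthm{Pleated} yields only the joint budget $c\,\ell(\mu) + \ell(\lambda) \le 6(c-1)$. Discarding the $\lambda$--term gives $\ell(\mu)\le 6 - 6/c$, not $6 - 7/c$; discarding the meridian terms gives $\ell(\lambda) \le 6c-6$, not $5c-6$. The missing ingredient is a \emph{lower} bound: every slope on a maximal cusp has length at least $1$ (Thurston; Adams' waist-size estimate). Plugging $\ell(\lambda)\ge 1$ into the budget gives $\ell(\mu)\le \bigl(6(c-1)-1\bigr)/c = 6 - 7/c$, and $\ell(\mu)\ge 1$ gives $\ell(\lambda) \le 6(c-1)-c = 5c-6$. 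Your proposal never invokes any lower bound on slope lengths, so no amount of counting can produce these correction terms, which are not ``artifacts of the count.'' Relatedly, your instinct to avoid the product $\ell(\mu)\,\ell(\lambda)$ for the area is misplaced: what is wasteful is multiplying the two \emph{final} bounds (which gives $\approx 30c$); the correct move keeps the product inside the shared budget,
\[ \area(\partial C) \le \ell(\mu)\,\ell(\lambda) \le \ell(\mu)\bigl(6(c-1) - c\,\ell(\mu)\bigr), \]
and maximizes the quadratic in $\ell(\mu)$; the maximum, attained at $\ell(\mu) = 3(c-1)/c$, is exactly $9(c-1)^2/c = 9c(1-1/c)^2$, so no separate cusp-height estimate is needed. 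Finally, a smaller conceptual slip: the meridional curves are not incidental loops ``forced wherever strands run close''---they are boundary components of the singular surface itself, one per crossing, which is why their number is pinned at $c$ by construction rather than by a geometric argument.
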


Another instance where \refthm{Pleated} applies is to knots with a pair of essential spanning surfaces $S_1$ and $S_2$; in this case the surface $S$ is taken to be the disjoint union of the two spanning surfaces. The following appears in \cite{BurtonKalf}. 

\begin{theorem}\label{Thm:GeneralMeridianBound}
Let $K$ be a hyperbolic knot with maximal cusp $C$. Suppose that $S_1$ and $S_2$ are essential spanning surfaces in $M=S^3 \setminus K$ and let $i(\partial S_1, \partial S_2)\neq 0$ denote the minimal intersection number of $\partial S_1, \partial S_2$ in $\partial C$. Finally, let $\chi = |\chi(S_1)|+|\chi(S_2)|$. 
Then, for the meridian $\mu$ and the shortest $\lambda$--curve,
\[ \ell(\mu) \leq \frac{6\chi}{i(\bdy S_1, \bdy S_2)}, \quad
\ell(\lambda) \leq 3 \chi, \quad \mbox{ and } \quad
\area(\bdy C) \leq \frac{18 \chi^2}{i(\bdy S_1, \bdy S_2).}
\]
\end{theorem}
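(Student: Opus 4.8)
The plan is to apply \refthm{Pleated} to the disjoint union $S = S_1 \sqcup S_2$ and then to convert the resulting length bound into the three stated inequalities by an elementary Euclidean computation on the cusp torus $\bdy C$. Since $S_1$ and $S_2$ are essential, so is $S$, and it can be homotoped to a pleated surface $f\from S \to M$ with $|\chi(S)| = |\chi(S_1)| + |\chi(S_2)| = \chi$ (here $\chi(S_1),\chi(S_2) \leq 0$, so the absolute values add). \refthm{Pleated} then gives
\[
\ell_C(S_1) + \ell_C(S_2) = \ell_C(S) \leq 6|\chi(S)| = 6\chi.
\]
Each spanning surface has connected boundary $\bdy S_i = K$, so $f(S_i) \cap \bdy C$ is a curve in the isotopy class of the boundary slope $\gamma_i := \bdy S_i$ on $\bdy C$; in particular its length is at least the length $\ell(\gamma_i)$ of its geodesic representative. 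Hence $\ell(\gamma_1) + \ell(\gamma_2) \leq 6\chi$.

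Next I would record the homological picture on $\bdy C$. Because $\bdy S_i$ is a longitudinal pushoff of $K$, it meets the meridian exactly once, so $i(\mu, \gamma_i) = 1$ and $\{\mu, \gamma_1\}$ is a basis of $H_1(\bdy C)$. Writing $\gamma_2 = \gamma_1 + k\mu$, one reads off $|k| = i(\gamma_1, \gamma_2) = i(\bdy S_1, \bdy S_2) =: i$. Realizing $\bdy C = \RR^2/\Lambda$ with $\mu$ horizontal of length $m = \ell(\mu)$, write $\gamma_1 = (a, h)$ with $h > 0$; then $\gamma_2 = (a + km, h)$ and $\area(\bdy C) = mh$. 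Since the horizontal component of a vector is bounded by its length, $|a| \leq \ell(\gamma_1)$ and $|a + km| \leq \ell(\gamma_2)$, so
\[
i \cdot m = |km| \leq |a + km| + |a| \leq \ell(\gamma_2) + \ell(\gamma_1) \leq 6\chi,
\]
which is exactly the meridian bound $\ell(\mu) \leq 6\chi / i(\bdy S_1, \bdy S_2)$.

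The remaining two bounds follow quickly. Both $\gamma_1$ and $\gamma_2$ are $\lambda$--curves (each meets $\mu$ once), so the shortest $\lambda$--curve has length at most $\min\{\ell(\gamma_1), \ell(\gamma_2)\} \leq \tfrac{1}{2}(\ell(\gamma_1)+\ell(\gamma_2)) \leq 3\chi$, giving $\ell(\lambda) \leq 3\chi$. For the cusp area, every $\lambda$--curve differs from $\gamma_1$ by a horizontal multiple of $\mu$ and hence has vertical component exactly $h$, so $h \leq \ell(\text{shortest }\lambda) \leq 3\chi$; combining this with $m \leq 6\chi/i$ yields $\area(\bdy C) = mh \leq 18\chi^2 / i(\bdy S_1, \bdy S_2)$.

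The step I expect to require the most care is the identification $i(\mu,\bdy S_i) = 1$ together with the claim that $f(S_i) \cap \bdy C$ is a single curve in the class $\gamma_i$ whose length dominates $\ell(\gamma_i)$: one must argue that a spanning surface of a \emph{connected} knot contributes a single longitudinal boundary curve on the cusp, valid even when $S_i$ is non-orientable (the non-orientability is confined to the interior, while a collar of $\bdy S_i = K$ is a genuine annulus), and that passing to the pleated image does not disconnect or further shorten this boundary below its geodesic length. Everything after this reduction is planar Euclidean geometry.
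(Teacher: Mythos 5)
Your proposal is correct and follows exactly the route the paper indicates for \refthm{GeneralMeridianBound}: pleat the disjoint union $S = S_1 \sqcup S_2$, apply \refthm{Pleated} to get $\ell(\gamma_1)+\ell(\gamma_2) \leq 6\chi$ for the two (integral) boundary slopes, and then deduce all three inequalities by elementary Euclidean geometry on the flat torus $\bdy C$, as in the cited proof of Burton--Kalfagianni. Your flagged subtlety (each spanning surface contributes a single boundary curve meeting $\mu$ once, even in the non-orientable case, with pleating not shortening it below the geodesic slope length) is indeed the point requiring care, and your collar-annulus justification handles it correctly.
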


\refthm{GeneralMeridianBound} is useful because the checkerboard surfaces of many knot diagrams are known to be essential.
For instance, the checkerboard surfaces of alternating diagrams are essential. Indeed, in \cite{AdamsEtAl:CuspSize} the authors use pleated checkerboard surfaces to prove the meridian of an alternating knot satisfies $\ell(\mu)<3-6/c$. Other knots with essential spanning surfaces include \emph{adequate knots}, which arose in the study of Jones type invariants. Ozawa first proved that two surfaces in such links are essential \cite{ozawa:adequate}; see also \cite{fkp:guts}. More generally, \refthm{GeneralMeridianBound}  applies to knots that admit alternating projections on surfaces so that they define essential checkerboard surfaces. These have been studied by Ozawa \cite{Ozawa} and Howie \cite{Howie:Thesis}.

All the results above indicate that meridian lengths should be strictly less than $6$. 
For knots in $S^3$, no examples are known with length more than $4$. 

\begin{question}\label{Ques:LengthMeridian}
Do all hyperbolic knots in $S^3$ satisfy $\ell(\mu)\leq 4$?
\end{question}

For links in $S^3$, Goerner showed there exists a link in $S^3$ with 64 components, and a choice of cusps for which each meridian length is $\sqrt{21}\approx 4.5826$ \cite{goerner}. 

\begin{question}\label{Ques:LengthLinkMeridians}
Given a hyperbolic link $L \subset S^3$, consider the shortest meridian among the components of $L$. What is the largest possible value of the shortest meridian? 
Is it $\sqrt{21}$?

\end{question}

The $6$--Theorem gives a bound on the length of any non-hyperbolic Dehn fillings. By geometrization, non-hyperbolic manifolds are either \emph{reducible} (meaning they contain an essential $2$--sphere), or \emph{toroidal} (meaning the contain an essential torus), or small Seifert fibered. The $6$--Theorem is only known to be sharp on toroidal fillings. Thus one may ask about the maximal possible length for the other types of fillings.
See \cite{HoffmanPurcell} for related questions and results.

\subsubsection{Upper bounds on area via cusp density}
The \emph{cusp density} of a cusped $3$--manifold $M$ is the volume of a maximal cusp divided by the volume of $M$. B\"or\"oczky  \cite{boroczky} showed that cusp density is universally bounded by $\sqrt{3}/2\vtet$, with the figure--8 knot complement realizing this bound. Recall from \refthm{LATUpperBound} that every hyperbolic knot $K \subset S^3$ satisfies $\vol(S^3\setminus K)\leq 10\, \vtet(t-1)$, where $t = t(D)$ is the twist number of any diagram. Combining this with B\"or\"oczky's theorem shows that a maximal cusp $C \subset S^3 \setminus K$ satisfies
\[ \area(\partial C)\leq 10 \sqrt 3 \cdot (t-1) \approx 17.32\cdot (t-1) .\]

We note that this bound can be arbitrarily far from sharp. This is already true for  \refthm{LATUpperBound}. In addition, Eudave-Mu{\~n}oz and Luecke  \cite{eudave-munoz-luecke} showed that the cusp density of a hyperbolic knot complement can be arbitrarily close to $0$.

\subsection{Lower bounds via horoball packing}

Theorems~\ref{Thm:Pleated} and~\ref{Thm:AdamsCuspBound} give methods for bounding cusp area from above. To give lower bounds on slope lengths, for example to apply the 6--Theorem, we must bound cusp area or cusp volume from below. The main tool for this is to use \emph{packing arguments}: find a disjoint collection of horoballs with Euclidean diameters bounded from below in a fundamental region of the cusp. Take their shadows on the cusp torus. The area of the cusp torus must be bounded below by the areas of the shadows.
One sample result is the following, from \cite{LP:AltCusps}.

\begin{lemma}\label{Lem:LPShortarcsVolume}
Suppose that a one-cusped hyperbolic $3$--manifold $M$ contains at least $p$ homotopically distinct essential arcs, each with length at most $L$ measured with respect to the maximal cusp $H$ of $M$. Then the cusp area $\area(\bdy H)$ is at least $p\,\sqrt{3}\,e^{-2L}$.
\end{lemma}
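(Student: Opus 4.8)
The plan is to bound the cusp area from below by exhibiting a collection of disjoint horoballs in the universal cover whose shadows on the maximal cusp torus $\bdy H$ have total area at least $p\sqrt{3}\,e^{-2L}$. Working in the upper half-space model $\HH^3 \cong \CC \times (0,\infty)$, I would normalize so that one lift of the cusp $H$ is the horoball based at $\infty$, say $\{(z,t) : t \geq 1\}$, with the parabolic subgroup $\Gamma_\infty$ fixing $\infty$ acting on the plane $\CC \times \{1\} = \bdy H$ by Euclidean translations. The cusp torus $\bdy H$ is then the quotient of this horosphere by the rank-two translation lattice, and $\area(\bdy H)$ equals the area of a fundamental domain for this lattice. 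The strategy is to produce, for each of the $p$ essential arcs, a distinct horoball tangent from below to $\bdy H$, and to bound its Euclidean diameter (hence the area of its shadow) from below using the length bound $L$.

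The key computation is the relationship between the length of an essential arc and the size of an associated horoball. First I would homotope each essential arc $\alpha_j$ rel endpoints to a geodesic arc running from the cusp back to itself; such an arc lifts to a geodesic in $\HH^3$ running from $\infty$ down to some point $w_j \in \CC$ on $\bdy \HH^3$ and back up. At the far endpoint $w_j$, there is a lift of the cusp $H$, namely a horoball $B_j$ based at $w_j$. The Euclidean diameter of $B_j$ is controlled by the length of $\alpha_j$: a standard calculation in $\HH^3$ (using the formula for hyperbolic distance between horospherical caps, as in the analogous estimates in \cite{Adams:thesis, AdamsEtAl:CuspSize}) shows that an arc of length at most $L$ forces $B_j$ to have Euclidean diameter at least $e^{-L}$. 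More precisely, if the arc runs between the horoball at $\infty$ at height $1$ and a horoball of diameter $d$, the length of the vertical-geodesic arc between their boundaries is $-\log d = \abs{\log d}$, so length at most $L$ gives $d \geq e^{-L}$. The shadow of $B_j$ on $\bdy H$ is a Euclidean disk of diameter at least $e^{-L}$.

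Next I would invoke a packing bound to combine these $p$ disks. The $p$ horoballs $B_1, \dots, B_p$ have pairwise disjoint interiors because the horoball neighborhood of the cusp is embedded (the arcs being \emph{distinct} in their homotopy classes ensures we get $p$ genuinely different lifts modulo $\Gamma_\infty$, so their shadows are distinct disks on the torus). The area of each shadow disk is at least $\tfrac{\pi}{4}e^{-2L}$ by the diameter bound; however, to get the stated constant $\sqrt 3$ rather than $\pi/4$, I would instead appeal to the optimal horoball-packing density in the cusp, as given by B\"or\"oczky's theorem \cite{boroczky}, which controls how densely horoball shadows can fill the Euclidean torus. Summing the shadow areas and dividing by the maximal packing density yields $\area(\bdy H) \geq p\sqrt 3\, e^{-2L}$, where the factor $\sqrt 3$ arises from the densest circle packing in the plane (hexagonal packing has density $\pi/\sqrt{12}$, and converting the per-disk area $\pi e^{-2L}/4$ through this density produces the $\sqrt 3$).

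The main obstacle I anticipate is twofold. First, establishing the precise diameter bound $d \geq e^{-L}$ requires care about how arc length is measured relative to the maximal cusp: one must ensure the endpoints are measured at $\bdy H$ (height $1$) and account for the possibility that the geodesic arc is not vertical, which only helps the estimate but must be checked. Second, and more delicate, is justifying that the $p$ disks are genuinely disjoint on the torus and that the packing constant $\sqrt 3$ is the correct one to use here rather than a cruder bound; this is where B\"or\"oczky's sharp packing result \cite{boroczky} does the essential work, and I would need to verify that the shadows, which are round Euclidean disks, fall under the hypotheses of that packing theorem. The bookkeeping that turns ``$p$ disjoint disks of diameter $\geq e^{-L}$'' into the clean bound $p\sqrt 3\, e^{-2L}$ is the crux, and I expect the geometric estimate on arc length versus horoball diameter to be the step requiring the most care.
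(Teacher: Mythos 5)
Your overall strategy is exactly the one behind this lemma (and the one sketched in the paper): normalize so the maximal cusp lifts to the horoball at height $1$, use the vertical-geodesic computation to show each arc of length $\ell \leq L$ meets a lift of the cusp of Euclidean diameter $e^{-\ell} \geq e^{-L}$, and then run a packing argument on $\bdy H$. Your diameter estimate is correct, as is your observation that distinct homotopy classes yield distinct $\Gamma_\infty$--orbits of horoballs with disjoint interiors. But the final bookkeeping has a genuine factor-of-two gap. With only $p$ horoballs, each shadow has area at least $\pi e^{-2L}/4$, and dividing by the hexagonal packing density $\pi/\sqrt{12}$ gives $(\pi e^{-2L}/4)\cdot(\sqrt{12}/\pi) = (\sqrt{3}/2)\,e^{-2L}$ per disk, hence only $\area(\bdy H) \geq \tfrac{1}{2}\,p\,\sqrt{3}\,e^{-2L}$. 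Your claim that this conversion ``produces the $\sqrt{3}$'' is an arithmetic slip, and no sharper packing constant can rescue it: two lifts of the cusp of diameter $e^{-L}$ can be mutually tangent, so their feet on $\bdy H$ can be at distance exactly $e^{-L}$, which means disks of radius $e^{-L}/2$ about $p$ points is genuinely the best this configuration yields.

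The missing idea, which is how the proof in \cite{LP:AltCusps} reaches the stated constant, is that each essential arc has \emph{two} endpoints on the cusp torus. Realizing each of the $p$ classes by a geodesic arc meeting $\bdy H$ perpendicularly produces $2p$ feet on $\bdy H$. Any two of these feet sit directly above centers of disjoint horoball lifts of the maximal cusp, and two disjoint horoballs based at $w_1, w_2$ with diameters $d_1, d_2$ satisfy $\abs{w_1 - w_2}^2 \geq d_1 d_2 \geq e^{-2L}$; together with the same estimate applied to $\Gamma_\infty$--translates, the $2p$ feet are pairwise at distance at least $e^{-L}$ in the Euclidean metric of the torus (one must also verify the feet are honestly distinct, using that distinct oriented homotopy classes of arcs correspond to distinct horoball orbits). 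Now place disks of the \emph{common} radius $e^{-L}/2$ about all $2p$ points --- note that you should shrink to equal radii before invoking the density bound, since $\pi/\sqrt{12}$ is a bound for packings by congruent disks, and packings by unequal disks can have density arbitrarily close to $1$. This gives $\area(\bdy H) \geq 2p \cdot (\sqrt{3}/2)\, e^{-2L} = p\,\sqrt{3}\,e^{-2L}$, as claimed. In short: your route is the right one, but as written it proves only half the stated bound, and the doubling via the two endpoints per arc is the essential step you are missing.
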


Similar techniques were also used to bound cusp areas in \cite{fkp:farey} and in \cite{futer-schleimer}.

The idea of the proof is that an arc from the cusp to itself of length $L$ lifts to an arc in the universal cover between two horoballs. We may identify the universal cover of $M$ with 
the upper half-space model of $\HH^3$, so that the boundary of one cusp  in $M$ lifts to a horosphere at Euclidean height $1$. The Euclidean metric on this horosphere coincides with the hyperbolic metric. Arcs of bounded length lead to horoballs whose diameter is not too small, and whose shadows have a definite area.

At this writing there is no general lower bound of cusp shapes for all hyperbolic knots. However, for alternating knots, Lackenby and Purcell found a collection of homotopically distinct essential arcs of bounded length, then applied \reflem{LPShortarcsVolume} to to show the following \cite{LP:AltCusps}.

\begin{theorem} \label{Thm:AltCusps}
Let $D$ be a prime, twist reduced alternating diagram of some hyperbolic knot $K$ and let $t = t(D)$ be the twist number of $D$. Let $C$ be the maximal cusp of $M=S^3\setminus K$. Then
\[ A (t-2) \leq \area(\partial C)  \leq 10 \sqrt{3} (t-1), \]
where $A$ is at least  $2.278 \times 10^{-9}$.
\end{theorem}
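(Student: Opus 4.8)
The plan is to prove the two inequalities separately. The upper bound is essentially the computation already recorded in the cusp-density discussion above: B\"or\"oczky's theorem \cite{boroczky} bounds the cusp density of any cusped hyperbolic $3$--manifold by $\sqrt 3/(2\vtet)$, so the maximal cusp $C$ satisfies $\vol(C)\leq \frac{\sqrt 3}{2\vtet}\vol(M)$. Since $\area(\partial C)=2\vol(C)$ and \refthm{LATUpperBound} gives $\vol(M)\leq 10\vtet(t-1)$, we get $\area(\partial C)\leq \frac{\sqrt 3}{\vtet}\cdot 10\vtet(t-1)=10\sqrt 3\,(t-1)$, which is the claimed upper bound. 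So all of the content is in the lower bound.

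For the lower bound, the strategy is to reduce everything to \reflem{LPShortarcsVolume} by producing a large supply of short essential arcs. Concretely, I would aim to exhibit at least $t-2$ homotopically distinct essential arcs in the one-cusped manifold $M=S^3\setminus K$, each of length at most a universal constant $L$ measured against the maximal cusp. Granting this, \reflem{LPShortarcsVolume} yields $\area(\partial C)\geq (t-2)\,\sqrt 3\,e^{-2L}$, so the theorem holds with $A=\sqrt 3\,e^{-2L}$; matching this against the stated value $A\geq 2.278\times 10^{-9}$ forces $L$ to be on the order of $10$. Thus the entire problem becomes (i) building and counting the arcs, and (ii) bounding their lengths uniformly.

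To construct the arcs I would use the two checkerboard surfaces of the prime, twist--reduced alternating diagram $D$. For such diagrams these surfaces are essential (the fact underlying \refthm{GeneralMeridianBound}), so each can be homotoped to a pleated surface as in \refdef{Pleated}. The diagram then furnishes a combinatorially natural family of essential arcs, one associated to each twist region and running from the cusp back to itself through the crossings of that region; since distinct twist regions yield non-homotopic arcs, the count is linear in $t$, giving at least $t-2$ of them. Checking that these arcs really are essential (not boundary-parallel) and pairwise non-homotopic is a combinatorial argument powered by the incompressibility of the checkerboard surfaces.

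The hard part, and the source of the minuscule constant $A$, is the uniform length bound: a priori an essential arc can be arbitrarily long, so one must show each arc has geodesic length at most a universal $L\approx 10$, independent of $K$ and of $t$. Here the plan is to exploit the pleated structure: each arc lies on a pleated checkerboard surface cut into ideal triangles that are mapped isometrically into $M$, and it crosses only a bounded number of these triangles, so the hyperbolic geometry of ideal triangles converts a bounded combinatorial crossing count into a bounded geometric length. Extracting an \emph{explicit} $L$ (hence the explicit $A=\sqrt 3\,e^{-2L}$) requires controlling the bending of the pleated surface and the way each arc enters the maximal cusp, and it is exactly the looseness of these geometric estimates that makes $A$ so small. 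I expect this uniform length control to be the principal obstacle; everything else is either the easy volume--density combination or a combinatorial bookkeeping of arcs.
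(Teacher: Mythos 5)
Your upper bound is exactly the paper's own derivation (B\"or\"oczky's cusp density bound combined with \refthm{LATUpperBound}), and the skeleton of your lower bound---produce on the order of $t$ homotopically distinct essential arcs of universally bounded length $L$ and apply \reflem{LPShortarcsVolume} to get $A = \sqrt{3}\,e^{-2L}$ with $L \approx 10$---is also the skeleton of the actual proof in \cite{LP:AltCusps}. The gap is in the one step you flag as hard: both your choice of surfaces and your mechanism for the uniform length bound do not work as stated.

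First, the surfaces: you propose the embedded checkerboard surfaces with one arc per twist region ``running through the crossings of that region.'' Such arcs are essentially crossing arcs, and a universal length bound for crossing arcs in alternating diagrams is precisely the \emph{open} \refconj{CrossingArcLength} (cf.\ \refconj{Arcs}); the paper says explicitly that the arcs used in the proof of \refthm{AltCusps} do \emph{not} lie on the checkerboard surfaces but on the complicated immersed \emph{twisted} checkerboard surfaces of \cite{LP:Twisted}. These are needed because the plain checkerboard surfaces have complexity governed by the crossing number $c(D)$ rather than $t(D)$: a twist region with many crossings contributes a long chain of bigons to one of them, inflating $|\chi|$. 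Second, the length argument: an ideal triangle has infinite diameter, so an arc crossing a bounded number of triangles of a pleated surface need not have bounded length, even after truncating at the maximal cusp. The preimage of the maximal cusp on the pleated surface yields horoball neighborhoods of the ideal vertices whose sizes are not universally bounded below; controlling them is exactly what the B\"or\"oczky packing step in the proof of \refthm{Pleated} accomplishes, and it gives only the \emph{total} bound $\ell_C(S) \leq 6|\chi(S)|$---which for checkerboard surfaces is of order $c(D)$, not a per-arc universal constant of order $10$. So the uniform $L$, the crux of the theorem and the source of the tiny constant $A$, is not delivered by your proposal; it is the hard content of \cite{LP:AltCusps} and requires the twisted-surface machinery.
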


For 2--bridge knots there is a much sharper lower bound \cite{fkp:farey}: 
\[ \frac{8\sqrt{3}}{147} (t-1) \leq \area(\partial C)  \leq  \frac{ \sqrt{3} \voct}{\vtet}(t-1). \]

Note that \refthm{AltCusps}, along with equation \refthm{LackenbyVolAlt} implies that the cusp density of alternating knots is universally bounded below. This is not true for non-alternating knots~\cite{eudave-munoz-luecke}. It would be interesting to study the extent to which \refthm{AltCusps} can be generalized.

In general, we would like to know how to obtain many homotopically distinct arcs that can be used in \reflem{LPShortarcsVolume}. The arcs used in the proof of \refthm{AltCusps} lie on complicated immersed essential surfaces, described in \cite{LP:Twisted}. It is conjectured that much simpler \emph{crossing arcs} should play this role. 

\begin{definition}\label{Def:CrossingArc}
Let $K$ be a knot with diagram $D(K)$. A \emph{crossing arc} is an embedded arc $\alpha$ in $S^3$ with  $\partial \alpha  \subset K$, such that in $D(K)$, $\alpha$ projects to an unknotted embedded arc running from an overstrand to an understrand in a crossing.
\end{definition}

The following conjecture is due to Sakuma and Weeks \cite{SakumaWeeks}. 

\begin{conjecture}\label{Conj:Arcs}
In a reduced alternating diagram of a hyperbolic alternating link, every crossing arc is isotopic to a geodesic.
\end{conjecture}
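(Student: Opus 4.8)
The plan is to recognize the crossing arcs as the edges of a natural ideal polyhedral decomposition and then to show that this decomposition is realized geometrically. Recall that Menasco's construction subdivides an alternating link complement $S^3\setminus L$ into two ideal polyhedra, glued along the plane of projection, and that the edges of these polyhedra are (after an isotopy) precisely the crossing arcs of \refdef{CrossingArc} --- one edge through each crossing. Thus \refconj{Arcs} is equivalent to the statement that Menasco's combinatorial decomposition can be straightened so that its edges become geodesics; this holds automatically if the two ideal polyhedra are realized as totally geodesic ideal polyhedra in $\HH^3$.

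First, I would make the identification of crossing arcs with Menasco edges precise, keeping careful track of the cell structure: the two checkerboard surfaces meet the plane of projection in the underlying $4$--valent diagram graph, the ideal vertices of the polyhedra lie on $L$, and the crossing arc at a crossing is the polyhedral edge running through that crossing from the overstrand to the understrand. This step is purely combinatorial and should present no real difficulty.

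Second --- and this is the heart of the matter --- I would compare Menasco's decomposition with the Epstein--Penner canonical decomposition, whose cells have totally geodesic faces and geodesic edges by construction, since it is defined metrically from the convex hull of a set of horoball lifts. If one can prove that the canonical decomposition either coincides with Menasco's or subdivides it while retaining every crossing arc as an edge, then each crossing arc is a canonical edge, hence a geodesic, and we are done. This is precisely the program begun by Sakuma and Weeks \cite{SakumaWeeks}.

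The main obstacle is exactly this geometric comparison. The Epstein--Penner decomposition is governed by the cusp shape and the global hyperbolic metric, data with no evident combinatorial formula, so matching the metrically defined cells to Menasco's diagrammatic cells is the genuine analytic content and the reason the conjecture remains open. A concrete line of attack would be to assign positively oriented shape parameters to a subdivision of Menasco's polyhedra into ideal tetrahedra, solve the gluing and completeness equations of \refthm{GluingCompleteness} to certify the hyperbolic structure, and then apply a convexity (tilt) argument to verify that the resulting geodesic polyhedra are the canonical ones; since the crossing arcs persist as edges under such a subdivision, they would emerge as geodesics. A softer, partial result might instead be extracted from \refthm{Pleated}: pleating the two essential checkerboard surfaces makes their ideal edges geodesic, and one could try to force the crossing arc at each crossing to lie along the common geodesic locus of the two pleated surfaces, though controlling this intersection in general appears to be as hard as the comparison above.
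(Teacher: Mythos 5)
You have not given a proof, and indeed the paper does not contain one either: the statement is \refconj{Arcs}, an \emph{open conjecture} attributed to Sakuma and Weeks \cite{SakumaWeeks}, which the paper records as known only for $2$--bridge links \cite[Appendix]{GueritaudFuter:2bridge} and certain closed alternating braids \cite{Tsvietkova}. Your submission is a research program, and to your credit you say so explicitly: the entire mathematical content is delegated to the step ``prove that the Epstein--Penner canonical decomposition coincides with Menasco's decomposition \cite{Menasco:Polyhedra}, or subdivides it while retaining every crossing arc as an edge,'' and no argument for that step is supplied. That step is not a technical detail to be checked later; it \emph{is} the conjecture, in the precise sense that the Sakuma--Weeks program you describe has been carried out only in the special families above, where explicit geometric triangulations satisfying the gluing and completeness equations of \refthm{GluingCompleteness} are available and a tilt/convexity argument can be run. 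For a general alternating link no one has produced positively oriented shape parameters solving the gluing equations on a subdivision of Menasco's polyhedra, so the certification step in your ``concrete line of attack'' is exactly where the proposal stops being a proof.

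Two further points deserve flagging. First, your fallback via \refthm{Pleated} cannot close the gap even in principle as stated: pleating a checkerboard surface along a triangulation containing the crossing arcs produces a \emph{homotopy} of the surface to a piecewise-geodesic \emph{immersion}, so each crossing arc becomes homotopic to a geodesic, whereas the conjecture demands an \emph{isotopy} of an embedded arc; upgrading homotopy to isotopy for arcs with endpoints on $K$ is an additional unproved claim. The same issue afflicts any naive ``straightening'' of Menasco's polyhedra: straightening realizes the cells geodesically only up to homotopy, and may create degenerate or overlapping cells, which is why one must pass through the canonical decomposition (whose cells are geodesic and embedded for the actual metric) in the first place. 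Second, your combinatorial identification of crossing arcs with the edges of Menasco's decomposition is correct and standard, but it only reformulates the conjecture; it contributes no progress toward the geometric comparison. In summary: the reformulation and the identification of the main obstacle are accurate, but the proposal proves nothing beyond what the paper already states, and the central comparison with the canonical decomposition remains entirely open.
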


\refconj{Arcs} is known for 2--bridge knots \cite[Appendix]{GueritaudFuter:2bridge} and certain closed alternating braids~\cite{Tsvietkova}. 

Computer experiments performed by Thistlethwaite and Tsvietkova \cite{thistle-tsviet} also support the following conjecture, which would give more information on the lengths of crossing arcs, hence more information on cusp areas. 

\begin{conjecture}\label{Conj:CrossingArcLength}
Crossing arcs in alternating knots have  length universally bounded above by $\log 8$.
\end{conjecture}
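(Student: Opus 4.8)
The plan is to bound the geodesic length of a crossing arc from above by exhibiting, in its homotopy class, an explicit representative whose length with respect to the maximal cusp is at most $\log 8$; since the geodesic representative minimizes length in its class, any such representative forces the conjectured bound. A useful observation is that this strategy does \emph{not} require \refconj{Arcs}: we need only an upper bound on the infimal length, not that the infimum is realized by an embedded geodesic. The right combinatorial model is the standard decomposition of an alternating link complement into two ideal polyhedra, in which the crossing arcs are exactly the edges of the decomposition, one per crossing. Since $K$ is a knot, each crossing arc runs from the single cusp to itself, so it lifts to a geodesic segment between two horoballs covering that cusp, and ``length'' means the hyperbolic distance between those horoballs.

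First I would establish the bound in the model case of the fully augmented link $L$ obtained by augmenting every twist region of $D(K)$ (see \refsec{CirclePack}). Here the hyperbolic structure is assembled from right-angled ideal polyhedra determined by a circle packing, the cusps are described explicitly by the packing, and each crossing arc appears as an arc running between knot strands through an augmented region. One then computes the distance between the relevant maximal horoballs directly as a Euclidean quantity read off from the packing. I expect the extremal value $\log 8$ to be realized by the regular ideal octahedron that appears locally around a single untwisted crossing circle, which would reduce the conjecture, for augmented links, to a finite computation in one polyhedron.

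To pass from $S^3 \setminus L$ back to $S^3 \setminus K$, recall that $K$ is recovered by $1/n$--Dehn filling on the crossing-circle cusps, reinserting the crossings of each twist region. As the twist regions grow, the filling slopes lengthen, the geometry of $S^3 \setminus K$ converges to that of $S^3 \setminus L$, and both the crossing arcs and the maximal cusp converge. The analytic input would be a cone-deformation/Dehn-filling estimate in the spirit of Hodgson--Kerckhoff \cite{HK:univ} and of Purcell's bounds on cusp shape under filling \cite{Purcell:Cusps}, used to control simultaneously the displacement of the arc and the change in the maximal cusp so that the length stays below $\log 8$. This would settle the highly twisted regime.

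The hard part will be uniformity over \emph{all} reduced alternating diagrams, and in particular the twist regions with only one or two crossings, where Dehn-filling estimates are weak or unavailable and the augmented-link model is farthest from the filled manifold. A more robust route for these cases would bypass augmentation and work with pleated checkerboard surfaces directly: pleat both checkerboard surfaces (which are essential for a reduced alternating diagram), locate each crossing arc among the resulting totally geodesic ideal triangles, and bound its length by a horoball-packing argument of the type underlying \reflem{LPShortarcsVolume}. The obstruction is that the pleating loci and the induced horoball sizes are not combinatorially rigid for a general alternating diagram, so extracting the \emph{sharp} constant $\log 8$ --- rather than some larger, diagram-dependent bound --- appears to require genuinely new control on the immersed surfaces of \cite{LP:Twisted}. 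This is exactly the gap that keeps \refconj{CrossingArcLength} open, and it is consistent with the fact that the only currently verified cases are those, such as $2$--bridge knots and certain closed alternating braids, where an explicit geometric triangulation pins the arc lengths down.
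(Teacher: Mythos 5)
You should be clear at the outset that the paper does not prove this statement at all: it is stated as an open \emph{conjecture}, supported only by the computer experiments of Thistlethwaite and Tsvietkova \cite{thistle-tsviet}, with crossing-arc geometry pinned down only in special cases ($2$--bridge links via the appendix of \cite{GueritaudFuter:2bridge}, certain closed alternating braids \cite{Tsvietkova}). So there is no proof to compare against, and your proposal --- as you yourself concede in your final paragraph --- is a research program rather than a proof. Two of your observations are sound and worth keeping: an upper bound on geodesic length does not require \refconj{Arcs}, since any representative in the homotopy class bounds the geodesic from above; and identifying crossing arcs with the edges of the two-polyhedron decomposition of an alternating link complement is the natural combinatorial frame (it is exactly the setting of Sakuma and Weeks \cite{SakumaWeeks}).

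The concrete gaps are these. First, your augmented-link step ends in speculation: you ``expect'' the extremal value $\log 8$ to come from the regular ideal octahedron but compute nothing, and $\log 8$ is an empirical constant from \cite{thistle-tsviet}, not a number anyone has derived from a polyhedron. Second, and fatally, the Dehn filling step cannot be carried out with the tools you cite. Cone-deformation estimates in the style of \cite{HK:univ, Purcell:Cusps} require very long filling slopes --- the paper notes Purcell's cusp-shape bounds need at least $116$ crossings per twist region --- while a reduced alternating diagram may have a single crossing in a twist region, where no such estimate applies. Even in the highly twisted regime, geometric convergence of $S^3 \setminus K$ to $S^3 \setminus L$ only says the arc lengths converge to the model value; it says nothing about the side from which they converge. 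If the model value is exactly $\log 8$, as you predict, you have zero margin: any filling that perturbs a length upward breaks the bound, and you would need a monotonicity or quantitative one-sided estimate that does not exist in the cited literature. There is a further unaddressed subtlety: lengths are measured against the \emph{maximal} cusp, and after filling the crossing circles the knot cusp expands into the vacated region, changing all horoball sizes; in $S^3 \setminus L$ the maximal cusp is not even unique (it depends on the expansion order of several cusps), so the comparison of arc lengths before and after filling is not well posed without extra care. Your closing paragraph correctly diagnoses all of this --- the pleated checkerboard and horoball-packing machinery behind \reflem{LPShortarcsVolume} yields diagram-dependent constants, not the sharp $\log 8$ --- which is precisely why \refconj{CrossingArcLength} remains open.
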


\bibliographystyle{amsplain}
\bibliography{references}

\end{document}